\newcommand{\vj}{\mathbf{j}}
\newcommand{\vJ}{\mathbf{J}}
\newcommand{\vI}{\mathbf{I}}
\newcommand{\rank}{\text{rk}}
\newcommand{\trace}{\text{tr}}
\newcommand{\scrF}{\mathcal{F}}
\newcommand{\ff}{\mathcal{F}}
\newcommand{\G}{\mathcal{G}}
\newcommand{\scrG}{\mathcal{G}}
\newcommand{\scrR}{\mathcal{R}}
\newcommand{\setC}{\mathbb{C}}
\newtheorem{theorem}{Theorem}
\newtheorem{lemma}[theorem]{Lemma}
\newtheorem{proposition}[theorem]{Proposition}
\newtheorem{remark}[theorem]{Remark}
\newtheorem{definition}[theorem]{Definition}
\newtheorem{question}{Question}
\title{Regular Intersecting Families}
\author{Ferdinand Ihringer\footnote{\hbadness=1500
Department of Mathematics: Analysis, Logic and Discrete Mathematics, Ghent University, Belgium.
The author is supported by a postdoctoral fellowship of the Research Foundation - Flanders (FWO).
Supported by ERC advanced grant 320924 while the author
was a postdoctoral fellow at the Einstein Institute of Mathematics, Hebrew University of Jerusalem, Israel.
{\tt Ferdinand.Ihringer@gmail.com}.},
Andrey Kupavskii\footnote{Moscow Institute of Physics and Technology, University of Oxford; Email:
{\tt kupavskii@yandex.ru}. Research supported by the grant RNF 16-11-10014.}
}
\begin{document}

\maketitle

\begin{abstract}
We call a family of sets  {\it intersecting}, if any two sets in the family intersect. In this paper we investigate intersecting families $\scrF$ of $k$-element subsets
  of $[n]:=\{1,\ldots, n\},$ such that every element of $[n]$ lies in the same (or approximately the same) number of members of $\scrF$.
  In particular, we show that we can guarantee $|\ff| = o({n-1\choose k-1})$ if and only if $k=o(n)$.
\end{abstract}

\section{Introduction}
Let us denote $[n]:=\{1,\ldots, n\}$ and $2^{[n]}:=\{X: X\subset [n]\}$. A {\it family} $\ff\subset 2^{[n]}$ of subsets of $[n]$ is called {\it intersecting}, if any two of its sets intersect. A family is called {\it $k$-uniform}, if it consists of sets of size $k$. For shorthand, we denote such a family by $\ff\subset {[n]\choose k}$.

The investigation of intersecting families started from the following famous result due to Erd\H{o}s, Ko and Rado.

\begin{theorem}[{\cite{ErdHos1961}}]\label{thm:ekr}
  Let $n \geq 2k$ and consider an intersecting family $\ff\subset {[n]\choose k}$. Then
  \begin{align*}
    |\ff|\le \binom{n-1}{k-1}.
  \end{align*}
  For $n > 2k$ equality occurs if and only if $\scrF$ consists of all $k$-sets
  that contain a fixed element of $[n]$.
\end{theorem}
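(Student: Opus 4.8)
\emph{Approach.} The plan is to prove the inequality by Katona's cyclic permutation (``circle'') method and then to extract the equality case from an analysis of when the method is tight. Work with the $(n-1)!$ cyclic orders of $[n]$ (placements of $1,\dots,n$ around a circle, up to rotation), and call a $k$-set an \emph{arc} of a cyclic order $\sigma$ if its elements occupy $k$ consecutive positions of $\sigma$. I would double count the pairs $(\sigma,A)$ with $A\in\scrF$ and $A$ an arc of $\sigma$.

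The heart of the argument is the local estimate: \emph{if $n\ge 2k$ and $\scrF$ is intersecting, then for every cyclic order $\sigma$ at most $k$ members of $\scrF$ are arcs of $\sigma$}. If none is an arc this is trivial; otherwise fix an arc $A\in\scrF$ and relabel positions so that $A$ occupies positions $1,\dots,k$. The arcs meeting $A$ are exactly the $2k-1$ arcs with starting position in $\{2-k,\dots,k\}$ (positions read mod $n$). One of them is $A$ (starting position $1$); the remaining $2k-2$ fall into $k-1$ pairs $\{B_s,B_{s+k}\}$ with $s\in\{2-k,\dots,0\}$, and within each pair the two arcs together occupy $2k\le n$ consecutive positions, hence are \emph{disjoint}. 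Since $\scrF$ is intersecting it contains at most one arc from each pair, so at most $(k-1)+1=k$ arcs of $\sigma$ in total.

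Now count the pairs. Each fixed $k$-set $A$ is an arc of exactly $k!\,(n-k)!$ cyclic orders (choose the cyclic position of the block, then order $A$ within it and its complement outside it). Hence
\begin{align*}
|\scrF|\,k!\,(n-k)! \;=\; \sum_{A\in\scrF}\#\{\sigma:\ A\text{ is an arc of }\sigma\} \;=\; \sum_{\sigma}\#\{A\in\scrF:\ A\text{ is an arc of }\sigma\} \;\le\; k\,(n-1)!,
\end{align*}
and dividing yields $|\scrF|\le k(n-1)!/(k!\,(n-k)!)=\binom{n-1}{k-1}$.

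For the equality case with $n>2k$: tightness forces \emph{every} cyclic order to carry exactly $k$ arcs of $\scrF$, which (by the structure above) must be $A$ together with one arc from each pair $\{B_s,B_{s+k}\}$, all pairwise intersecting; a short argument using $n>2k$ shows this forces $k$ \emph{consecutive} arcs, which share a common element $c(\sigma)$. Re-running the count while crediting only arcs through $c(\sigma)$ still reaches $k(n-1)!$, so equality persists term by term: whenever $A\in\scrF$ is an arc of $\sigma$, one has $c(\sigma)\in A$. Finally, since any two intersecting $k$-sets with union $\ne[n]$ (automatic here because $2k<n$) can be realized simultaneously as arcs of a single cyclic order, one should be able to propagate from order to order to conclude that a single element lies in every member of $\scrF$, i.e. $\scrF$ is the full star through that element. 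I expect this propagation/globalization step — upgrading the per-order common elements $c(\sigma)$ to one fixed element — to be the main obstacle; an alternative is to run the equality analysis after compressing $\scrF$ by shifts (which preserve both size and the intersecting property) and then argue that no shift could have destroyed a non-star extremal configuration, the delicate point there being the transfer of extremality back through the shifts.
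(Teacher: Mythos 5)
The paper does not actually prove this statement: it is the classical Erd\H{o}s--Ko--Rado theorem, quoted with a citation to \cite{ErdHos1961}, so there is no in-paper argument to compare yours against. Judged on its own, your Katona cycle-method proof of the \emph{inequality} is correct and complete: the identification of the $2k-1$ arcs meeting a fixed arc $A$, their partition (minus $A$ itself) into $k-1$ pairs of arcs that jointly occupy $2k\le n$ consecutive positions and are therefore disjoint, the count $k!\,(n-k)!$ of cyclic orders in which a fixed $k$-set is an arc, and the final double count all check out and give $|\scrF|\le\binom{n-1}{k-1}$.

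The uniqueness part for $n>2k$, however, contains two genuine gaps. First, the assertion that the $k$ pairwise intersecting arcs carried by a tight cyclic order must be $k$ \emph{consecutive} arcs through a common point does not follow directly from your pairing structure: when $2k<n<3k-1$, two arcs $B_s$ and $B_{s'}$ with $s<s'$ can also intersect ``around the back'' of the circle (namely when $s'-s\ge n-k+1$, which is compatible with $s'-s\le 2k-2$), so a left-overlapping and a right-overlapping arc need not meet near $A$, and excluding non-consecutive transversals of the $k-1$ pairs requires an explicit argument; this is precisely the delicate step in Katona's uniqueness analysis, and you only assert it. Second, and more seriously, the globalization step --- passing from a common element $c(\sigma)$ for each individual cyclic order to a single element lying in \emph{every} member of $\scrF$ --- is proposed but not carried out; as you acknowledge, neither the ``propagate from order to order'' route nor the shifting alternative is executed, and the shifting route in particular has the well-known difficulty of transferring the equality characterization back through the compressions. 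As written, the proposal establishes the bound but not the characterization of equality; the cleanest repairs are either to complete the local consecutivity lemma and then argue that any two members of $\scrF$ can be made arcs of a common tight cyclic order, or to derive uniqueness from a separate stability statement such as the Hilton--Milner theorem \cite{HM}.
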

An intersecting family with all its sets containing a fixed element of $[n]$ is called {\it trivial}.
The EKR theorem was sharpened by Hilton and Milner \cite{HM}, who determined the size of the largest {\it non-trivial} intersecting $k$-uniform family. Later, a very strong result in this direction was obtained by Frankl \cite{Fra1}. Below we give a stronger and a more convenient version of that theorem, proven in \cite{KZ}.  The \textit{degree $\delta(x)$} of $x \in [n]$ is defined as the number of members
of $\scrF$ that contain $x$. The {\it maximal degree} of $\ff$ is denoted by $\Delta(\ff)$. The {\it diversity} $\gamma(\ff)$ of $\ff$ is the number of sets from $\ff$ not containing an element of the maximal degree: $\gamma(\ff):=|\ff|-\Delta(\ff)$.

\begin{theorem}[\cite{KZ}]\label{thmKZ} Let $n>2k>0$ and $\ff\subset {[n]\choose k}$ be an intersecting family. Then, if $\gamma(\ff)\ge {n-u-1\choose n-k-1}$ for some real $3\le u\le k$, then \begin{equation}\label{eq01}|\ff|\le {n-1\choose k-1}+{n-u-1\choose n-k-1}-{n-u-1\choose k-1}.\end{equation}
\end{theorem}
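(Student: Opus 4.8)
After relabelling the ground set we may assume $\Delta(\ff)=\delta(1)$. Split $\ff=\ff_1\sqcup\G$, where $\ff_1=\{F\in\ff:1\in F\}$ and $\G=\{F\in\ff:1\notin F\}$; thus $|\ff_1|=\Delta(\ff)$ and $|\G|=\gamma(\ff)=:\gamma$. Let $\scrL=\{F\setminus\{1\}:F\in\ff_1\}\subseteq\binom{[2,n]}{k-1}$ be the link of $1$. Three facts drive the proof: (a) $\G$ is an intersecting family of $k$-subsets of the $(n-1)$-set $[2,n]$; (b) $\scrL$ and $\G$ are cross-intersecting, i.e.\ $L\cap G\neq\emptyset$ whenever $L\in\scrL$ and $G\in\G$; (c) every $y\in[2,n]$ satisfies $\delta(y)\le\delta(1)=|\scrL|$, which, writing $d_{\scrL}(y)$ and $d_{\G}(y)$ for the number of sets of $\scrL$, resp.\ of $\G$, through $y$, reads $d_{\scrL}(y)+d_{\G}(y)\le|\scrL|$.

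By (b) no $L\in\scrL$ is disjoint from a member of $\G$, so $\scrL$ avoids
$\scrB:=\{A\in\binom{[2,n]}{k-1}:A\cap G=\emptyset\ \text{for some}\ G\in\G\}$; hence $|\scrL|\le\binom{n-1}{k-1}-|\scrB|$ and
\[
|\ff|=|\scrL|+\gamma\ \le\ \binom{n-1}{k-1}-|\scrB|+\gamma .
\]
Thus it suffices to prove the shadow inequality $|\scrB|\ge\gamma+\binom{n-u-1}{k-1}-\binom{n-u-1}{n-k-1}$. This is false for an arbitrary intersecting $\G$ of the prescribed size — a full star on $[2,n]$ has $|\scrB|=\gamma$, which is too small once $u\ge 3$ — so condition (c) has to be used; heuristically it bounds $\Delta(\G)$ and prevents $\G$ from concentrating on a few elements. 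The boundary configuration is $\G=\{G\in\binom{[2,n]}{k}:\{2,\dots,u+1\}\subseteq G\}$, for which $\scrB=\{A\in\binom{[2,n]}{k-1}:A\cap\{2,\dots,u+1\}=\emptyset\}$, so that $|\scrB|=\binom{n-u-1}{k-1}$ and $\gamma=\binom{n-u-1}{n-k-1}$: here the inequality is tight, and substituting back recovers the right-hand side of \eqref{eq01} together with the extremal family.

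To prove the shadow inequality, pass to complements: $\G^{c}:=\{[2,n]\setminus G:G\in\G\}\subseteq\binom{[2,n]}{n-1-k}$ has the property that any two of its members meet in at least $n-2k$ elements (this is equivalent to $\G$ being intersecting), and $\scrB$ is precisely the $(n-2k)$-fold lower shadow of $\G^{c}$. One may, as a convenience, first reduce to a left-compressed $\G$ by shifting $\ff$, with the usual care that shifting can change the diversity. One then combines the Kruskal--Katona theorem in Lov\'{a}sz's form with the surplus intersection structure of $\G^{c}$ and with condition (c) to show that, among all intersecting $\G$ obeying (c) with $|\G|=\gamma\ge\binom{n-u-1}{n-k-1}$, the quantity $|\scrB|-\gamma$ is minimised by the configuration above; an induction peeling off the element $n$ of the ground set, through the families $\G\cap\binom{[2,n-1]}{k}$ and $\{G\setminus\{n\}:n\in G\in\G\}$, is a natural way to organise this optimisation.

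The main obstacle is this last step. A bare application of Kruskal--Katona to $\G^{c}$ gives a lower bound on $|\scrB|$ that points the wrong way once $\gamma$ is large, so one genuinely has to exploit that a large diversity — via (c) together with the cross-intersecting structure — stops $\G$ from being star-like, and to convert ``$\gamma$ is large'' into an effective core size $u$ with $\gamma\approx\binom{n-u-1}{n-k-1}$. Quantifying this trade-off sharply enough to push $|\scrB|-\gamma$ down to its value on the $u$-core family is the crux of the argument.
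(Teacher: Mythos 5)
The paper itself contains no proof of Theorem~\ref{thmKZ}; it is imported verbatim from \cite{KZ}, so there is no internal argument to compare yours against. Judged on its own terms, your proposal is a correct \emph{framing} but not a proof. The reduction is sound and standard: $|\ff|=|\scrL|+\gamma$ with $\scrL$ the link of a maximum-degree element, $\scrL$ and $\G$ cross-intersecting, $\scrB$ equal to the $(k-1)$-uniform shadow of the complement family $\G^{c}$, and the degree condition (c) genuinely indispensable --- your full-star example is the right one, since the cross-intersecting pair $\scrL=\{L:2\in L\}$, $\G=\{G:2\in G\}$ attains $|\scrL|+|\G|=\binom{n-1}{k-1}$ and so no bound of the form \eqref{eq01} can follow from cross-intersection alone. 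You also identify the extremal configuration correctly. But everything after that is deferred: the inequality $|\scrB|\ge\gamma+\binom{n-u-1}{k-1}-\binom{n-u-1}{n-k-1}$ under condition (c) \emph{is} the theorem, and your text offers only a list of candidate tools (Kruskal--Katona in Lov\'asz's form, shifting, induction peeling off the last element) together with the explicit admission that quantifying the trade-off is ``the crux of the argument''. Without that step the argument yields nothing beyond the trivial $|\ff|\le\binom{n-1}{k-1}-|\scrB|+\gamma$.

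Two concrete obstacles make me doubt the route would close as sketched. First, $(i,j)$-shifts preserve the intersecting property but not the identity of the maximum-degree element, the value of $\gamma$, or condition (c); since both the hypothesis $\gamma\ge\binom{n-u-1}{n-k-1}$ and the inequality you need refer to exactly these quantities, ``the usual care'' is carrying the entire difficulty and it is not clear the reduction to compressed families survives. Second, as you yourself observe, a bare Kruskal--Katona bound on the shadow of $\G^{c}$ points the wrong way once $\gamma$ is large, and no mechanism is given for converting the ``surplus intersection'' of $\G^{c}$ plus condition (c) into the sharp lower bound on $|\scrB|-\gamma$. For reference, the proof in \cite{KZ} does not go through shadows or compression at all: it establishes a sharp inequality for cross-intersecting pairs $\mathcal A\subset\binom{[m]}{a}$, $\mathcal B\subset\binom{[m]}{b}$ by an averaging argument over suitably constructed regular bipartite graphs between layers of the Boolean lattice (whence the title of that paper), and Theorem~\ref{thmKZ} is then deduced by applying that inequality to the pair $(\scrL,\G)$ together with the maximality of $\delta(1)$. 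So your decomposition matches the first step of the cited proof, but the engine that drives the bound is absent.
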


Other {\it stability} results for $k$-uniform intersecting families were obtained by several researchers (cf. \cite{BBN, DF, EKL, KM, KK, Pyad, R}). Dinur and Friedgut \cite{DF} introduced the methods of analysis of Boolean functions to the study of intersecting families. Roughly speaking, they showed that {\it any} intersecting family is essentially contained in {\it juntas} with small centers. We say that a family $\mathcal J\subset 2^{[n]}$ is a {\it $j$-junta}, if there exist a subset $J\subset [n]$ of size $j$, such that the membership of a set in $\ff$ is determined only by its intersection with $J$, that is, for some family $\mathcal J^*\subset 2^{J}$ we have $\ff=\{F:F\cap J\in \mathcal J^*\}$. Here is one of their two main results.

\begin{theorem}[\cite{DF}]\label{thmdf} For any integer $r\ge 2$, there exist functions $j(r), c(r)$, such that for any integers $1 < j(r) < k < n/2$, if $\ff\subset {[n]\choose k}$ is an intersecting family with $|\ff|\ge c(r){n-r\choose k-r}$, then there exists an intersecting $j$-junta $\mathcal J$ with
$j\le j(r)$ and \begin{equation}\label{eqDF} |\ff\setminus\mathcal J|\le c(r){n-r\choose k-r}.\end{equation}
\end{theorem}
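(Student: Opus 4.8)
The plan is to pass to the $p$-biased hypercube with $p=k/n$ and run the junta method from the analysis of Boolean functions; Erd\H{o}s--Ko--Rado (Theorem~\ref{thm:ekr}) already tells us that the extremal junta should be a star, and our task is to show that \emph{any} family of size at least $c(r)\binom{n-r}{k-r}$ is essentially captured by a bounded junta. \textit{Step 1: passage to the biased cube.} Write $f=\mathbb 1_{\mathcal F}\colon\{0,1\}^n\to\{0,1\}$ and equip $\{0,1\}^n$ with the product measure $\mu_p$, $p=k/n$. Using a standard coupling between the uniform measure on $\binom{[n]}{k}$ and $\mu_p$ (all but a $\mu_p$-negligible fraction of mass sits on levels within $O(\sqrt{n\log n})$ of $k$), every inequality $|\mathcal G|\ge\varepsilon\binom{n-s}{k-s}$ becomes $\mu_p(g)\gtrsim\varepsilon' p^{s}$ up to constants, and ``intersecting'' becomes ``there is no pair $A,B$ with $f(A)=f(B)=1$ and $A\cap B=\emptyset$'' (it is convenient, though not essential, to first replace $\mathcal F$ by the trace on level $k$ of its up-closure, which stays intersecting in this sense). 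So it suffices to prove: if $f$ is intersecting in this sense and $\mu_p(f)\ge c(r)p^{r}$, then there are $J\subseteq[n]$ with $|J|\le j(r)$ and an intersecting $\mathcal J^\ast\subseteq 2^{J}$ such that $f$ disagrees with the junta $A\mapsto\mathbb 1[A\cap J\in\mathcal J^\ast]$ on a set of $\mu_p$-measure at most $c(r)p^{r}$; pushing this back through the coupling gives the theorem.

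\textit{Step 2: the global-versus-structured dichotomy.} The engine is that an intersecting $f$ is either \emph{global} --- for every $S$ with $|S|\le j(r)$ and every restriction $y$ of the coordinates of $S$, the restricted function $f_{S\to y}$ has $\mu_p$-measure at most $Kp^{|S|}$, $K$ an absolute constant --- or it admits such an $S,y$ with anomalously large restricted measure. In the global case I would use the hypercontractive inequality for global functions on the $\mu_p$-cube (whose constants, unlike Bonami--Beckner for small $p$, do not deteriorate as $p\to0$): with the coupling $A\sim\mu_p$ and $B$ an independent random subset of $[n]\setminus A$ including each element with probability $p/(1-p)$ (so $B\sim\mu_p$ as well, and $A\cap B=\emptyset$ always), globalness forces $\mathbb E[f(A)f(B)]$ to be bounded below by a positive quantity, morally $\mu_p(f)^2/K^{O(1)}$, producing a disjoint pair on which $f$ is $1$ and contradicting intersection. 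Hence, once $c(r)$ is large enough relative to $K$, the hypothesis $\mu_p(f)\ge c(r)p^{r}$ forces $f$ to be non-global, so there is a small ``core'' $J_0$, $|J_0|\le j(r)$, of influential coordinates.

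\textit{Step 3: from a core to a junta.} Decompose $f$ along $J_0$: for $T\subseteq J_0$ let $f_T$ be the restriction to $\{A:A\cap J_0=T\}$, a Boolean function on the coordinates $[n]\setminus J_0$, so that $\mu_p(f)=\sum_{T\subseteq J_0}p^{|T|}(1-p)^{|J_0|-|T|}\mu_p(f_T)$. The intersecting property of $\mathcal F$ says exactly that $f_T$ and $f_{T'}$ are cross-intersecting whenever $T\cap T'=\emptyset$. Call $T$ \emph{heavy} if $\mu_p(f_T)\gg p^{\,r-|T|}$. Two disjoint heavy fibers would be cross-intersecting and each of non-negligible measure; re-running Step~2 (now for cross-intersecting pairs) on them produces further influential coordinates which we adjoin to the junta, and this recursion has depth at most $r$ because each descent consumes a factor of $p$ from the measure budget. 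Let $J$ be the union of all cores found, $|J|\le j(r)$. By construction the heavy fibers over $J$ contain no two disjoint sets, so $\mathcal J^\ast:=\{T\subseteq J:\ T\text{ heavy}\}$ is an intersecting family of subsets of $J$, while the contribution of all non-heavy fibers to $\mu_p(f)$ is $\sum_{T}p^{|T|}\cdot O(p^{\,r-|T|})=O(2^{|J|}p^{r})\le c(r)p^{r}$. Thus $f$ disagrees with $\mathbb 1[A\cap J\in\mathcal J^\ast]$ on $\mu_p$-measure at most $c(r)p^{r}$, and transferring back yields the intersecting junta $\mathcal J=\{F:F\cap J\in\mathcal J^\ast\}$ with $|\mathcal F\setminus\mathcal J|\le c(r)\binom{n-r}{k-r}$.

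\textit{Main obstacle.} The crux is Step~2: since $p=k/n$ may be tiny, the ordinary $p$-biased hypercontractive inequality is far too weak to conclude that a global intersecting function has small measure, so the argument genuinely needs the ``hypercontractivity for global functions'' principle, with $p$-independent constants. Producing that estimate in a usable form, and then bookkeeping the recursion in Step~3 so that the junta size $j(r)$ and the accumulated error constant $c(r)$ remain finite --- they blow up with $r$, but must not depend on $n$ or $k$ --- is the technical heart of the proof.
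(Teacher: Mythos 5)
First, a point of order: the paper does not prove Theorem~\ref{thmdf} at all --- it is imported verbatim from Dinur and Friedgut \cite{DF} and used as a black box in the proof of Theorem~\ref{thmmain2}. So there is no in-paper argument to compare yours against; the relevant benchmark is the original proof in \cite{DF}, which monotonizes the family, passes to the $p$-biased cube, and combines a Fourier-analytic study of almost-intersecting monotone families with Friedgut's junta theorem. Your outline follows a genuinely different and more recent route --- the ``globalness versus structure'' dichotomy and the recursive junta method in the style of Keller--Lifshitz and Keevash--Lifshitz--Long--Minzer --- which, when carried out in full, does reprove (and in some respects strengthen) the Dinur--Friedgut theorem.

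As a proof, however, the proposal has genuine gaps, and you flag the main one yourself. Step~2 rests entirely on an unproved analytic inequality: that a global intersecting function on the $\mu_p$-cube, with $p=k/n$ possibly tending to $0$, satisfies $\mathbb{E}[f(A)f(B)]\gtrsim\mu_p(f)^2$ under the disjoint-pair coupling. That is precisely the content of the $p$-independent hypercontractivity theorem for global functions, a substantial theorem in its own right; invoking it as the ``engine'' without proof leaves the argument incomplete. Two further steps need real work. In Step~1 the transference between the uniform measure on $\binom{[n]}{k}$ and $\mu_{k/n}$ is delicate in the reverse direction: a junta approximation in $\mu_p$-measure does not automatically yield $|\ff\setminus\mathcal J|\le c(r)\binom{n-r}{k-r}$ on the exact level $k$, since $\mu_p$ spreads its mass over levels $k\pm O(\sqrt{k})$ and the up-closure can distort densities; this is exactly where \cite{DF} spend considerable effort. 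In Step~3, the claim that the heavy fibers form an intersecting family $\mathcal J^*$ requires a quantitative cross-intersecting version of Step~2 applied to pairs of fibers whose measures are only bounded below by $p^{r-|T|}$ and $p^{r-|T'|}$; whether the product of these bounds clears the threshold needed for a contradiction depends on how the globalness constants degrade with $r$, and this bookkeeping --- which is what ultimately determines $j(r)$ and $c(r)$ --- is not carried out. In short, you have a correct road map for a known alternative proof, but not yet a proof.
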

The methods of Dinur and Friedgut were developed and extended to other extremal questions on set systems by other researchers, notably Ellis, Keller, and Lifshitz \cite{EKL, KL}.\\

In this paper we study intersecting families with respect to another natural measure of non-triviality: the distribution of degrees. The question we address in this paper is as follows: how large an intersecting family may be, provided that all elements of the ground set have the same degree? We call such families {\it regular}. In what follows, we always denote the degree of an element in such a family  by $\delta$.\\

Theorem \ref{thm:ekr} implies that for $n>2k$
the largest $k$-uniform intersecting family has one element of degree $\binom{n-1}{k-1}$
while all the other elements of $[n]$ have degree $\binom{n-2}{k-2}$. Hence, the spread between the largest and the smallest degree is very large.

Using Theorem~\ref{thmKZ} with $u=3$, we get that any family of size strictly bigger than ${n-1\choose k-1}+{n-4\choose k-3}-{n-4\choose k-1}=3{n-3\choose k-2}+{n-3\choose k-3}$ has diversity strictly smaller than ${n-4\choose k-3}$. Therefore, the maximal degree of any such family is $\Delta(\ff)=|\ff|-\gamma(\ff)> 3{n-3\choose k-2}$. At the same time, the sum of the degrees of all elements except for a most popular one is $(k-1)\Delta(\ff)+k\gamma(\ff)$, and so the minimal degree of $\ff$ is at most $\frac {k-1}{n-1}\Delta(\ff)+\frac {k}{n-1}\gamma(\ff)<\frac {k-1}{n-1}\Delta(\ff)+\frac{k}{n-1}{n-4\choose k-3}<\frac {k-1}{n-1}\Delta(\ff)+{n-3\choose k-2}$. It is easy to see that the difference between the maximal and the minimal degrees is at least $\Delta (\ff)-\frac {k-1}{n-1}\Delta(\ff)-{n-3\choose k-2}=\frac{n-k}{n-1}\Delta(\ff)-{n-3\choose k-2}>\frac 3{2}{n-3\choose k-2}-{n-3\choose k-2}>0$. Therefore, we may conclude the following.

\begin{proposition}\label{prop1} If $n\ge 2k>0$, then any regular $k$-uniform intersecting family has size at most $3{n-3\choose k-2}+{n-3\choose k-3}$.
\end{proposition}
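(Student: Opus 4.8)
The plan is a proof by contradiction. Two reductions come first. If $n = 2k$, then iterating Pascal's rule gives $\binom{n-1}{k-1} = 3\binom{n-3}{k-2} + \binom{n-3}{k-3}$, so Theorem~\ref{thm:ekr} already yields the bound; and the cases $k \le 2$ are degenerate (a regular intersecting family of $1$- or $2$-sets on $[n]$ with $n \ge 2k$ is trivially small) and checked directly. So from now on assume $n > 2k$ and $k \ge 3$.

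Suppose for contradiction that $|\ff| > 3\binom{n-3}{k-2} + \binom{n-3}{k-3}$. The first step is the binomial identity $3\binom{n-3}{k-2} + \binom{n-3}{k-3} = \binom{n-1}{k-1} + \binom{n-4}{k-3} - \binom{n-4}{k-1}$ (two applications of Pascal's rule), together with the observation $\binom{n-4}{n-k-1} = \binom{n-4}{k-3}$. These let me apply Theorem~\ref{thmKZ} with $u = 3$, which is legitimate because $3 \le 3 \le k$: if we had $\gamma(\ff) \ge \binom{n-4}{k-3}$, then the theorem would give $|\ff| \le \binom{n-1}{k-1} + \binom{n-4}{k-3} - \binom{n-4}{k-1} = 3\binom{n-3}{k-2}+\binom{n-3}{k-3}$, contrary to assumption. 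Hence $\gamma(\ff) < \binom{n-4}{k-3}$, so
\[
\Delta(\ff) \;=\; |\ff| - \gamma(\ff) \;>\; 3\binom{n-3}{k-2} + \binom{n-3}{k-3} - \binom{n-4}{k-3} \;\ge\; 3\binom{n-3}{k-2},
\]
where the last inequality is $\binom{n-3}{k-3} \ge \binom{n-4}{k-3}$.

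The second step brings in regularity through one incidence count. Fix an element $x$ of degree $\Delta(\ff)$ and count the pairs $(y,F)$ with $F \in \ff$, $y \in F$ and $y \ne x$: each member of $\ff$ containing $x$ contributes $k-1$ of them and each member avoiding $x$ contributes $k$, so $\sum_{y \ne x}\delta(y) = (k-1)\Delta(\ff) + k\gamma(\ff)$. Since $\ff$ is regular, every $\delta(y)$ and also $\Delta(\ff)$ equals the common degree $\delta$, so $(n-1)\delta = (k-1)\delta + k\gamma(\ff)$, i.e.\ $(n-k)\,\Delta(\ff) = k\,\gamma(\ff)$. Feeding in $\gamma(\ff) < \binom{n-4}{k-3}$ and $\Delta(\ff) > 3\binom{n-3}{k-2}$, and using $\binom{n-4}{k-3} = \frac{k-2}{n-3}\binom{n-3}{k-2}$, the whole thing collapses to $3(n-k)(n-3) < k(k-2)$, which fails for every $n > 2k$ with $k \ge 3$: there $n-k > k$ and $n-3 > k$, so the left-hand side exceeds $3k^2 > k(k-2)$. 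This contradiction proves the claim.

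I do not expect a genuine obstacle: the only substantial ingredient is Theorem~\ref{thmKZ}, and everything else is Pascal's rule plus a single counting identity. The parts that merely need care are bookkeeping --- checking the hypotheses $n > 2k$ and $u \le k$ of Theorem~\ref{thmKZ}, handling the $n = 2k$ and $k \le 2$ boundary cases (where the claimed bound is sharp or the family is forced to be trivial), and verifying the final numeric inequality.
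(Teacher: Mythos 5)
Your proof is correct and follows essentially the same route as the paper's: Theorem~\ref{thmKZ} with $u=3$ forces $\gamma(\ff)<\binom{n-4}{k-3}$ and hence $\Delta(\ff)>3\binom{n-3}{k-2}$, and the same incidence count $\sum_{y\ne x}\delta(y)=(k-1)\Delta(\ff)+k\gamma(\ff)$ then rules out regularity. The only (minor) differences are that you substitute regularity into that count to obtain the exact identity $(n-k)\Delta(\ff)=k\gamma(\ff)$ instead of comparing the maximal and minimal degrees, and that you treat $n=2k$ and $k\le 2$ separately via Erd\H{o}s--Ko--Rado --- a precaution the paper's argument implicitly needs anyway, since Theorem~\ref{thmKZ} is stated only for $n>2k$.
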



We write $S_n$ for the symmetric group on $[n]$. A \textit{symmetric family} $\scrF$ on $[n]$ is a family such that the automorphism group $Aut(\ff):=\{\sigma\in S_n: \sigma(\ff)=\ff\}$ of $\ff$ is a transitive subgroup of $S_n$, that is, for all $i,j\in [n]$ there exists a permutation $\sigma\in Aut(\ff)$ such that $\sigma(i)=j$. Clearly, any symmetric family must be regular, and the converse is not true in general.

Cameron, Frankl and Kantor \cite{Cameron1989} studied maximal intersecting families $\ff\subset 2^{[n]}$ (that is, families of size $2^{n-1}$), that are additionally symmetric or regular.
They proved the following result that relates the size of the ground set and the size of the smallest set in the family.

\begin{theorem}[{\cite[Theorem 3]{Cameron1989}}]\label{thm:bnd_on_n} Consider an intersecting family $\ff\subset 2^{[n]}$ of size $2^{n-1}$ and an arbitrary set $F\in \ff$.
  \begin{enumerate}
   \item If $\ff$ is symmetric, then $n \le |F|^2$.
   \item If $\ff$ is regular, then $n \le \frac{2}{\pi}4^{|F|}$.
  \end{enumerate}
\end{theorem}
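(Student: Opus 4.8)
\emph{Part 1 (symmetric).} Here I would argue almost directly. Let $k=|F|$. Since $Aut(\ff)$ maps $\ff$ to itself, the orbit $\scrG=\{\sigma(F):\sigma\in Aut(\ff)\}$ is a $k$-uniform subfamily of $\ff$, hence intersecting, and transitivity of $Aut(\ff)$ makes every element of $[n]$ have the same degree $d$ in $\scrG$ (a permutation carrying $i$ to $j$ maps the members of $\scrG$ through $i$ bijectively onto those through $j$); double counting gives $d=k|\scrG|/n$. Fixing one member $G_0\in\scrG$ and choosing $A\in\scrG$ uniformly at random,
\[
  \setE\bigl[|A\cap G_0|\bigr]=\sum_{x\in G_0}\Pr[x\in A]=|G_0|\cdot\frac{d}{|\scrG|}=k\cdot\frac kn=\frac{k^2}{n}.
\]
As $\scrG$ is intersecting, $|A\cap G_0|\ge 1$ always, so $k^2/n\ge 1$, i.e.\ $n\le |F|^2$.

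\emph{Part 2 (regular): set-up.} First I would record two reductions. (i) A maximal intersecting family is automatically an up-set: if $A\in\ff$, $A\subseteq A'$ and $A'\notin\ff$, then $[n]\setminus A'\in\ff$ is disjoint from $A$, a contradiction; hence $\ff\supseteq\{A:F\subseteq A\}$. (ii) It suffices to treat the case where $F$ has \emph{minimum} size among the members of $\ff$, since $\tfrac2\pi 4^{|F|}$ is increasing in $|F|$; write $k$ for this minimum size and put $\bar F:=[n]\setminus F$. For each nonempty $T\subseteq F$ let $\ff'_T=\{S\subseteq\bar F: T\cup S\in\ff\}$. Up-closedness makes each $\ff'_T$ an up-set in $2^{\bar F}$, monotone in $T$, with $\ff'_F=2^{\bar F}$; and intersectingness is equivalent to: $\ff'_T$ and $\ff'_{T'}$ are cross-intersecting whenever $T\cap T'=\varnothing$. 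Minimality of $F$ forces $\ff'_{\{i\}}\neq\varnothing$ for every $i\in F$ (otherwise $F\setminus\{i\}\in\ff$). So $\scrA_1,\dots,\scrA_k$, with $\scrA_i:=\ff'_{\{i\}}$, are $k$ nonempty, pairwise cross-intersecting up-sets in $2^{\bar F}$, where $|\bar F|=n-k$.

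\emph{Part 2: the spectral inequality.} Next I would pass to the $\{-1,1\}$-valued indicator $g$ of $\ff$. Maximality makes $g$ odd, and using $|\ff|=2^{n-1}$ one computes $\widehat g(\{x\})=\delta(x)/2^{n-2}-1$ for each $x\in[n]$. Regularity says this is a single number $c$ for all $x$, and monotonicity of $g$ gives $c=\mathrm{Inf}_x(g)\ge 0$, with $c>0$ since $g$ is non-constant. Parseval then yields
\[
  1=\sum_S\widehat g(S)^2\ \ge\ \sum_{|S|=1}\widehat g(S)^2=n c^2,\qquad\text{hence}\qquad n\le \frac1{c^2}.
\]
Thus everything reduces to a lower bound on the common degree, namely $c\ge\sqrt{\pi/2}\cdot 2^{-k}$ (equivalently $\delta\ge 2^{n-2}\bigl(1+\sqrt{\pi/2}\,2^{-k}\bigr)$), which then gives $n\le \tfrac2\pi 4^{k}$. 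To get at $c$ I would exploit minimality of $F$: flipping a coordinate $i\in F$ changes $g$ at every $Y$ of the form $Y=(F\setminus\{i\})\cup S$ with $S\subseteq\bar F$ and $\bar F\setminus S\in\scrA_i$ (indeed $Y\cup\{i\}\supseteq F\in\ff$, while $Y\notin\ff$ since its complement $\{i\}\cup(\bar F\setminus S)$ lies in $\ff$), which already shows $c=\mathrm{Inf}_i(g)\ge|\scrA_i|/2^{n-1}$. One then has to show, using that $\scrA_1,\dots,\scrA_k$ are pairwise cross-intersecting up-sets \emph{together with} the full system of degree-balance equations, that the $\scrA_i$ cannot all be much smaller than a fixed fraction of $2^{\bar F}$, tracking constants carefully. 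The precise factor $\tfrac2\pi$ should emerge from the Wallis identity $\prod_{i\ge1}\bigl(1-(2i)^{-2}\bigr)=2/\pi$, equivalently $\prod_{i=1}^{k}\bigl(1-(2i)^{-2}\bigr)\ge 2/\pi$ for every $k$.

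\emph{Where the difficulty lies.} The genuinely hard part is this last step: quantifying how strongly regularity together with intersectingness pushes the common degree $\delta$ above its unconditional lower bound $2^{n-2}$, in terms of the size $k$ of the smallest member. The cheap estimate $c\ge|\scrA_i|/2^{n-1}$ is by itself far too weak — it is badly non-tight in the extremal (projective-plane-type) configurations — so one must combine cross-intersection, monotonicity, and all $n$ degree equations simultaneously rather than examining a single coordinate. I expect this combinatorial/analytic estimate, and not the reductions or the Fourier step, to be essentially the whole content of Part 2.
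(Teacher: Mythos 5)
First, a remark on the comparison you were asked against: the paper does not prove Theorem~\ref{thm:bnd_on_n} at all --- it is quoted from Cameron, Frankl and Kantor \cite{Cameron1989} as background --- so there is no in-paper argument to match. Judged on its own terms, your Part~1 is a correct and complete proof: the orbit $\scrG$ of $F$ under $Aut(\ff)$ is an intersecting, $k$-uniform, $Aut(\ff)$-invariant family, transitivity forces it to be regular with common degree $d=k|\scrG|/n$, and the averaging $\setE[|A\cap G_0|]=k^2/n\ge 1$ gives $n\le |F|^2$. (This is essentially the elementary version of the counting that, done slightly more carefully by excluding $A=G_0$, yields the stronger $n\le k^2-k+1$ of Theorem~\ref{thm:char_n_max}.)

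Part~2, however, is not a proof but a reduction with the central lemma missing, and you say so yourself. The reductions (maximality $\Rightarrow$ up-set and odd indicator; the identity $\delta(x)=2^{n-2}+\tfrac12 I_x$ giving $\widehat g(\{x\})=c\ge 0$; Parseval giving $n\le 1/c^2$) are all fine, and they correctly convert the theorem into the single quantitative claim $c\ge\sqrt{\pi/2}\,2^{-|F|}$. But that claim is exactly where all of the content of the bound $n\le\frac{2}{\pi}4^{|F|}$ lives, and you do not establish it. The only concrete estimate you offer, $c\ge|\scrA_i|/2^{n-1}$ with $\scrA_i\neq\varnothing$, gives $c\ge 2^{1-n}$ and hence the vacuous $n\le 4^{n-1}$; the appeal to ``cross-intersection, monotonicity, and all $n$ degree equations'' and to a Wallis product is a statement of intent, not an argument. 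In particular, it is not even clear from what you wrote that $k$ pairwise cross-intersecting nonempty up-sets $\scrA_1,\dots,\scrA_k\subseteq 2^{\bar F}$ must have total size on the order of $2^{n-k}\cdot 2^{k}\cdot 2^{-k}\sqrt{\pi/2}$ or any other bound strong enough to close the gap; some such extremal lemma (in \cite{Cameron1989} it is a counting argument built around central binomial coefficients, which is where $2/\pi$ enters via $\binom{2m}{m}\sim 4^m/\sqrt{\pi m}$) has to be stated and proved. As it stands, Part~2 should be regarded as unproved.
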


Ellis, Kalai and Narayanan investigated $k$-uniform symmetric intersecting families
\cite{Ellis2017}. They obtained the following analogue of the theorem above.

\begin{theorem}[{\cite[Lemma 4.5]{Ellis2017}}]\label{thm:ellis_proj}
  Let $k^2-k+1$ be prime.
  A $k$-uniform symmetric intersecting family $\scrF$ on $[n]$ satisfies $n \leq k^2-k+1$.
  Further, equality holds if and only if $\scrF$ is a point-transitive projective plane of order $k-1$.
\end{theorem}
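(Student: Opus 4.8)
The plan is to run a short spectral argument on the incidence matrix of $\scrF$. Let $M\in\{0,1\}^{\scrF\times[n]}$ be given by $M_{F,i}=1$ iff $i\in F$, and write $N:=|\scrF|$. Since $\scrF$ is symmetric it is regular, so every element of $[n]$ lies in a common number $\delta$ of members of $\scrF$, and counting incidences gives $n\delta=Nk$. Consider the Gram matrix $B:=MM^T$: it is positive semidefinite of rank at most $n$; all its diagonal entries equal $k$; its off-diagonal entries equal $|F\cap F'|\ge 1$ (here we use that $\scrF$ is intersecting); and $B\mathbf 1=k\delta\,\mathbf 1=\tfrac{Nk^2}{n}\mathbf 1$ by regularity, so $\mathbf 1$ is a Perron eigenvector with eigenvalue $\tfrac{Nk^2}{n}$.

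Now set $C:=B-J$, where $J$ is the $N\times N$ all-ones matrix. Then $C$ is entrywise nonnegative (diagonal $k-1\ge 0$, off-diagonal $|F\cap F'|-1\ge 0$), and every row of $C$ sums to $\nu:=\tfrac{Nk^2}{n}-N=\tfrac{N(k^2-n)}{n}$, so $\nu$ is an eigenvalue of $C$ (on $\mathbf 1$) and equals its spectral radius. Since $\mathbf 1$ is a common eigenvector of $B$ and $J$, the operator $C$ preserves $\langle\mathbf 1\rangle$ and $\mathbf 1^{\perp}$: on $\langle\mathbf 1\rangle$ it acts as $\nu\ge 0$, and on $\mathbf 1^{\perp}$ it coincides with $B$, which restricted there is positive semidefinite of rank at most $n-1$. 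Hence $C$ is positive semidefinite with at most $n$ positive eigenvalues, each at most $\nu$. Comparing with the trace, $N(k-1)=\trace(C)\le n\nu=N(k^2-n)$, i.e.\ $n\le k^2-k+1$.

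For equality, suppose $n=k^2-k+1$, so the last inequality is tight and $\nu=\tfrac{N(k-1)}{n}>0$. Then $C$ has exactly $n$ positive eigenvalues, all equal to $\nu$, i.e.\ $C=\nu\Pi$ for an orthogonal projection $\Pi$ of rank $n$. The matrix $\Pi=C/\nu$ is a nonnegative symmetric idempotent with all row sums $1$; from $(\Pi^2)_{F,F'}\ge\Pi_{F,F''}\Pi_{F'',F'}$ its support relation is transitive, so $\Pi$ is block-diagonal with each block a normalized all-ones matrix — the partition matrix of a partition of $\scrF$ into classes of common size $N/n$. But distinct $F,F'$ in one class would satisfy $|F\cap F'|-1=C_{F,F'}=\nu\cdot\tfrac nN=k-1$, i.e.\ $|F\cap F'|=k$, impossible for distinct $k$-sets; hence every class is a singleton, $N=n=k^2-k+1$, $\delta=k$, and $|F\cap F'|=1$ for all distinct $F,F'$. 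Then $M$ is a nonsingular $n\times n$ matrix and $MM^T=(k-1)I+J$; running the same argument on $M^TM$ (same spectrum, constant row sums) gives $M^TM=(k-1)I+J$ as well, so any two points lie on exactly one member of $\scrF$: thus $\scrF$ is a projective plane of order $k-1$, and since $\mathrm{Aut}(\scrF)$ is transitive it is point-transitive. Conversely, the line set of a point-transitive projective plane of order $k-1$ is clearly a $k$-uniform symmetric intersecting family on $k^2-k+1$ points. (When $k^2-k+1$ is prime one can add, via Burnside's classification of transitive permutation groups of prime degree, that $\mathrm{Aut}(\scrF)$ is $2$-transitive or contains a Singer cycle, though this is not needed here.)

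The crux is the spectral bookkeeping in the second paragraph: the trace comparison is exactly what upgrades the crude estimate $n\le k^2$ — which is merely $\nu\ge 0$, equivalently the averaging inequality $\mathbb{E}\,|A\cap B|=k^2/n\ge 1$ for $A,B$ independent uniformly random members of $\scrF$ — to the sharp bound. For the bound alone it would already be enough to note that $C=B-J$ has rank at most $n+1$ and at most one negative eigenvalue; the stronger statement that $C$ is positive semidefinite with at most $n$ positive eigenvalues (because subtracting $J$ is ``clean'', $\mathbf 1$ being a common eigenvector of $B$ and $J$) is what makes the equality analysis work, after which the only genuinely combinatorial input is the elementary fact that a nonnegative symmetric idempotent stochastic matrix is a partition matrix.
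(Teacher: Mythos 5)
Your proof is correct, and it takes a genuinely different route from the paper's. The paper does not reprove Theorem~\ref{thm:ellis_proj} directly; its ``second proof'' is the proof of the regular version, Theorem~\ref{thm:char_n_max}, which is carried out inside the Johnson scheme: regularity forces the characteristic vector to be orthogonal to $V_1$ (so $(aQ)_1=0$), Delsarte's LP bound gives $(aQ)_2\ge 0$, and the combination $\gamma_i=c_2Q_{i2}-(k-1)(n-k-1)c_1Q_{i1}=-i(n-2)(kn-in-k^2+i)$ is strictly negative for $n>k^2-k+1$, which kills the inner distribution; the equality case is read off from which $\gamma_i$ vanish. You instead work with the $|\scrF|\times[n]$ incidence matrix and run a trace-versus-spectral-radius argument on the Gram matrix $MM^T-\vJ$: positive semidefiniteness, rank at most $n$, and row sums equal to the spectral radius give $N(k-1)=\trace(MM^T-\vJ)\le n\nu=N(k^2-n)$. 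This is essentially the classical Fisher-type linear-algebra argument (close in spirit to F\"uredi's and Lov\'asz's original proofs); it is more elementary and self-contained, needs no association-scheme machinery, and, as you note, never uses the primality of $k^2-k+1$ --- consistent with the paper's Theorem~\ref{thm:char_n_max}, which drops that hypothesis. What the paper's heavier route buys is the generalization to $s$-subset-regular families (Theorem~\ref{thm:bnd_n_small}) and compatibility with the LP framework used elsewhere. One small point to tighten in your equality analysis: the claim that all blocks of $\Pi$ have the \emph{common} size $N/n$ does not follow from the rank count alone, but it does follow instantly from the constant diagonal, since $k-1=C_{FF}=\nu\Pi_{FF}=\nu/m_b$ forces $m_b=\nu/(k-1)=N/n$ for every block; equivalently, within any block $\Pi$ is constant, so a block of size at least $2$ would give distinct $k$-sets with $|F\cap F'|-1=C_{FF'}=C_{FF}=k-1$, the contradiction you want.
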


This result was already shown by Lov\'{a}sz without the characterization of equality, but not published
(it is mentioned in \cite{Lovasz1975,Lovasz1977}), and F\"{u}redi \cite{Furedi1981,Furedi1990}
for regular families. We give a second proof for this result that was obtained independently.

\begin{theorem}[{\cite[Corollary 3]{Furedi1981}}]\label{thm:char_n_max}
  A $k$-uniform regular intersecting family $\scrF$ on $[n]$ satisfies $n \leq k^2-k+1$.
  Further, equality holds if and only if $\scrF$ is a projective plane of order $k-1$.
\end{theorem}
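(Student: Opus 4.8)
The plan is to fix a single ground-set element and play the intersecting condition off against regularity through a double count of codegrees, finishing with Cauchy--Schwarz. Write $m=|\scrF|$; double counting incidences between points and members of $\scrF$ gives the basic identity $n\delta=mk$. First I would dispose of the trivial case: if some point lies in every member of $\scrF$, then by regularity \emph{every} point has degree $m$, so $nm=mk$ forces $n=k\le k^2-k+1$ (and this case is never extremal for $k\ge 2$). Hence I may assume $\scrF$ is nontrivial, which means precisely that for every $x\in[n]$ there is a member of $\scrF$ avoiding $x$.

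Now fix $x\in[n]$ and, for $p\ne x$, write $c(x,p):=|\{F\in\scrF:\{x,p\}\subseteq F\}|$ for the codegree. The key observation is that for any $B\in\scrF$ with $x\notin B$, every member through $x$ meets $B$ in a point other than $x$, so each of the $\delta$ members through $x$ contributes at least $1$ to $\sum_{p\in B}c(x,p)$, giving $\sum_{p\in B}c(x,p)\ge\delta$. Summing this over the $m-\delta$ members $B$ avoiding $x$, swapping the order of summation, and using that the number of members avoiding $x$ but containing a fixed $p$ is $\delta-c(x,p)$, I obtain $\sum_{p\ne x}c(x,p)\bigl(\delta-c(x,p)\bigr)\ge(m-\delta)\delta$. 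Since $\sum_{p\ne x}c(x,p)=\delta(k-1)$ (each of the $\delta$ members through $x$ has $k-1$ other points), this rearranges to $\sum_{p\ne x}c(x,p)^2\le\delta(\delta k-m)=\delta^2(k^2-n)/k$, the last equality using $n\delta=mk$ once more.

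To finish, Cauchy--Schwarz gives $\delta^2(k-1)^2=\bigl(\sum_{p\ne x}c(x,p)\bigr)^2\le(n-1)\sum_{p\ne x}c(x,p)^2\le(n-1)\delta^2(k^2-n)/k$; cancelling $\delta^2$ yields $k(k-1)^2\le(n-1)(k^2-n)$. The right-hand side is a downward parabola in $n$ that equals $k(k-1)^2$ at $n=k^2-k+1$ and is strictly decreasing beyond it (its derivative there is $-(k-1)^2$), so $k(k-1)^2\le(n-1)(k^2-n)$ forces $n\le k^2-k+1$. For the equality case, assume $n=k^2-k+1$, so every inequality above is tight. Tightness of Cauchy--Schwarz forces $c(x,p)$ to be independent of $p$ for each fixed $x$, hence equal to $\delta(k-1)/(n-1)=\delta/k$; tightness of the per-block estimate forces $|A\cap B|=1$ whenever $x\in A\setminus B$. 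Since $x$ was arbitrary, any two distinct members meet in exactly one point and any two points lie in exactly $\delta/k$ common members, so $\scrF$ is a $2$-$(k^2-k+1,k,\delta/k)$ design whose blocks pairwise meet in a single point; then the $\delta$ blocks through a fixed point partition the other $k^2-k$ points into $(k-1)$-sets, forcing $\delta=k$ and $\delta/k=1$, i.e.\ $\scrF$ is a projective plane of order $k-1$. The converse is immediate.

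I expect the only genuine obstacle to be \emph{locating} the argument rather than carrying it out: the crux is realizing that, for a fixed point $x$, the quantity to control is $\sum_p c(x,p)^2$, and that summing the elementary inequality $\sum_{p\in B}c(x,p)\ge\delta$ over all blocks $B$ avoiding $x$ is exactly the step where regularity enters. Once that is in place, the identities $n\delta=mk$ and $\sum_{p\ne x}c(x,p)=\delta(k-1)$ together with a single use of Cauchy--Schwarz close the bound, and the equality analysis is routine design theory.
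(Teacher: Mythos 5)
Your proof is correct, but it follows a genuinely different route from the paper. The paper derives Theorem~\ref{thm:char_n_max} from Delsarte's LP bound in the Johnson scheme: it combines the linear constraint $\sum_i Q_{i1}a_i=0$ coming from regularity (orthogonality of $\chi$ to $V_1$, Lemma~\ref{lem:char_F}) with the nonnegativity $\sum_i Q_{i2}a_i\ge 0$ (Theorem~\ref{thm:lp_bound}), and shows that the explicit combination $\gamma_i=\alpha_i-(k-1)(n-k-1)\beta_i=-i(n-2)(kn-in-k^2+i)$ is nonpositive on $\{1,\dots,k-1\}$ once $n\ge k^2-k+1$, with the equality analysis reading off the projective-plane parameters from the inner distribution. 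You instead fix a point $x$, double count codegrees $c(x,p)$ against the blocks avoiding $x$, and close with Cauchy--Schwarz; this is in essence F\"uredi's original elementary argument (the theorem is cited from \cite{Furedi1981}), and all your steps check out: the identity $n\delta=mk$, the bound $\sum_{p\ne x}c(x,p)^2\le\delta^2(k^2-n)/k$ (which is exactly where regularity enters, via $\delta(p)=\delta$), the parabola computation $(n-1)(k^2-n)\ge k(k-1)^2\Rightarrow n\le k^2-k+1$, and the equality analysis forcing $|A\cap B|=1$ for all pairs and $\delta=k$. The two proofs use the same underlying information (first and second moments of the intersection sizes restricted to $\scrF$) and are in that sense dual to one another: yours is self-contained and avoids the association-scheme machinery of Section~\ref{sec:prelim}, while the paper's algebraic formulation transfers verbatim to other cometric schemes (as noted in Remark~\ref{thm:bnd_n_small}'s vicinity) and shares its setup with the proof of Theorem~\ref{thm:bnd_n_small_intro}. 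Two minor points you may wish to make explicit: the statement should be read for nonempty $\scrF$ (so that $\delta>0$ and the division by $\delta^2$ is legitimate), and in the equality case the conclusion $c(x,p)=\delta/k\in\{0,1\}$ uses that the blocks through $x$ pairwise meet only at $x$, which you correctly derive from the tightness of the per-block estimate.
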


Ellis, Kalai and Narayanan obtained the following bound on the size of a $k$-uniform symmetric intersecting family.

\begin{theorem}[{\cite[Theorem 1.3]{Ellis2017}}] \label{thm:ellis_bnd}
  There exists a constant $c > 0$ (independent of $n$ and $k$) such that for $k \leq n/2$ and any symmetric intersecting family $\ff$ we have
  \begin{align*}
     |\ff| \leq \exp\left( -\frac{c(n-2k) \log n}{k(\log n - \log k)} \right) \binom{n}{k}.
  \end{align*}
\end{theorem}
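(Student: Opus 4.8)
The plan is to combine the junta method for intersecting families with the extra rigidity that symmetry forces, namely that a symmetric family is regular. Write $\alpha := |\ff|/\binom{n}{k}$ and let $G \le S_n$ be the transitive automorphism group of $\ff$; since $\sum_{x}\delta(x)=k|\ff|$ and regularity makes all degrees equal, every point of $[n]$ lies in exactly $\frac{k}{n}|\ff|$ members of $\ff$. If $\alpha$ is already below the target bound there is nothing to prove, so we assume $\alpha$ is not too small and try to bound it from above. Fix an integer $r$ with $j(r)<k$ and apply Theorem~\ref{thmdf} (or, for a far milder growth of $j(r)$, the refinements of Ellis, Keller and Lifshitz): either $\alpha < c(r)\binom{n-r}{k-r}/\binom{n}{k}$ — in which case we already have a good bound, since $\binom{n-r}{k-r}/\binom{n}{k}\le (k/n)^{r}$ — or $\ff$ is approximated in symmetric difference to within $e:=\mu(\ff\triangle\mathcal J)\le c(r)(k/n)^r$ by an intersecting $j$-junta $\mathcal J$ with centre $J$, $|J|\le j(r)$ (the junta can be taken so that $\mathcal J\setminus\ff$ is small as well, not just $\ff\setminus\mathcal J$).

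Suppose we are in the second case. Since $\mathcal J$ is intersecting, so is its core $\mathcal J^\ast\subseteq 2^{J}$, which therefore has a transversal inside $J$; let $T\subseteq J$ be a smallest one and $\tau:=|T|$, so every member of $\mathcal J$ meets $T$. Counting incidences between $T$ and $\mathcal J$ shows that some $x\in T$ lies in at least $\frac1\tau\mu(\mathcal J)\binom{n}{k}$ members of $\mathcal J$, hence — after discarding the $\le e\binom nk$ sets of $\mathcal J\setminus\ff$ — in roughly $\frac1\tau(\alpha-e)\binom nk - e\binom nk$ members of $\ff$; on the other hand each $y\notin J$ lies in only about $\frac{k}{n}\mu(\mathcal J)\binom nk$ members of $\mathcal J$, hence in at most about $(\frac kn+e)\binom nk$ members of $\ff$. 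By regularity these two counts coincide (both equal $\frac kn|\ff|$), and rearranging yields $\alpha(1-\tau k/n)\lesssim \tau e$. Thus \emph{as soon as $\tau< cn/k$} we get $\alpha\lesssim (n/k)e\le c(r)(k/n)^{r-1}$, which is of the desired shape. In other words, the only way $\ff$ can be large is that every approximating junta has covering number $\tau\gtrsim n/k$, and in particular a large centre, $|J|\ge\tau\gtrsim n/k$.

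It remains to rule out large juntas with large covering number, and this is where the gap $n-2k$ is consumed. Here one bootstraps: from a symmetric (hence regular) intersecting family on $\binom{[n]}{k}$ that is close to an intersecting junta whose core has large covering number, one extracts — by conditioning on a suitable part of the centre and passing to a link/restriction — a regular intersecting family on $\binom{[n']}{k'}$ with $n'<n$, $k'<k$, density at least a controlled multiple of $\alpha$, and with the quantity $n'-2k'$ decreased by only a bounded amount. The iteration can be run about $\Theta(n-2k)$ times before $n'$ drops to essentially $2k'$; at that point the residual family is too large to be so structured — e.g.\ once the ground set is too small relative to $k'$ to support an intersecting family of covering number comparable to $n'/k'$, in the spirit of Theorems~\ref{thm:char_n_max} and~\ref{thm:ellis_proj} — and unwinding the factors accumulated along the way gives $|\ff|\le \exp\!\big(-c(n-2k)\log n/(k\log(n/k))\big)\binom nk$. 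The precise exponent falls out of optimising the choice of $r$ in the junta step against the growth rate of $j(r)$ and tracking the density gained per round of bootstrapping.

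I expect the main obstacle to be exactly this quantitative bookkeeping. One needs a junta-approximation theorem whose centre size $j(r)$ grows slowly enough that a large enough $r$ is admissible (the original Dinur--Friedgut bound is too weak here, so the sharper versions are essential), a bootstrapping step that preserves regularity, keeps $n-2k$ almost intact, and loses only a $\mathrm{poly}(n/k)$ factor in density per round, and a careful balancing of the error $c(r)\binom{n-r}{k-r}$ against the target density at every stage; any of these, done wastefully, degrades the exponent. A conceptually cleaner (though less self-contained) alternative is to phrase everything in the language of \emph{spread} families: a regular intersecting family is automatically spread, and quantitative ``spread implies small'' results then give the bound more directly, with $n-2k$ governing how far one may push the bias parameter before the intersecting condition ceases to be restrictive.
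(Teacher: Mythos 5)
The paper does not prove Theorem~\ref{thm:ellis_bnd} at all: it is quoted from Ellis, Kalai and Narayanan \cite{Ellis2017}, whose proof applies the Friedgut--Kalai sharp threshold theorem \cite{FrKa} to the monotone up-closure of $\ff$, using the \emph{transitivity of the automorphism group} in an essential way to force a sharp threshold for the $p$-biased measure, and then compares $\mu_{p}$ at $p\approx k/n$ with the uniform density on $\binom{[n]}{k}$. Your proposal takes a genuinely different route, but it cannot work as written, because the only consequence of symmetry you ever invoke is regularity of the point degrees. Regularity is provably too weak for this bound: Proposition~\ref{prop3} and the constructions of Section~\ref{sec4} (a projective plane of order $q$ summed with a balanced complete uniform family via construction \eqref{eq11}) produce \emph{regular} intersecting families in $\binom{[n]}{k}$ with $n>ck$ and size $\Theta_c\bigl(\binom{n-1}{k-1}\bigr)=\Theta_c\bigl(\binom{n}{k}\bigr)$, whereas the claimed inequality would force size $n^{-c'}\binom{n}{k}=o\bigl(\binom{n-1}{k-1}\bigr)$ in exactly that regime. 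So any correct proof must exploit the group action beyond its effect on singleton degrees; your argument never does, and the link/restriction bootstrapping you sketch destroys even regularity, let alone symmetry, so the invariant you would need to propagate is lost after one round.

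There is a second, independent quantitative gap. In the regime $n\gg k$, where your first step (junta approximation plus the max-degree versus average-degree comparison --- essentially the paper's proof of Theorem~\ref{thmmain2}) does apply, it yields only $|\ff|\le C(r)\binom{n-r}{k-r}\approx C(r)(k/n)^{r}\binom{n}{k}$ for a \emph{constant} $r$, i.e.\ a polynomial saving in $k/n$. The target bound at, say, $n=k^{2}$ is $e^{-\Theta(k)}\binom{n}{k}$, exponentially small in $k$; no bounded number of junta rounds with constant $r$ can reach that, and letting $r$ grow runs into the growth of $j(r)$ and $c(r)$, which you acknowledge but do not control. Finally, the closing remark that ``a regular intersecting family is automatically spread'' is false: regularity constrains only the degrees of singletons, not of pairs or larger sets, so the spreadness route also needs the symmetry hypothesis rather than its regular shadow.
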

Their proof uses tools from the analysis of Boolean functions, notably, the sharp threshold result due to Friedgut and Kalai \cite{FrKa}.
They managed to show that this bound is
tight up to the constant $c$ in the exponent, if $k/n$ is bounded away from zero.

Using Theorem~\ref{thmdf}, we are able to obtain the following upper bound on the size of regular intersecting families.

\begin{theorem}\label{thmmain1} For any $r\in \mathcal N$ there exists a constant $C=C(r)$, such that for $n>Ck$ any regular family $\ff\subset {[n]\choose k}$ satisfies
$$|\ff|\le C {n-r\choose k-r}.$$
\end{theorem}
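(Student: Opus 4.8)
The plan is to combine Theorem~\ref{thmdf} with a counting argument that exploits regularity to show that the junta center $J$ guaranteed by that theorem must in fact be small—indeed, trivial—once $n$ is large compared to $k$. Fix $r$ and apply Theorem~\ref{thmdf} with parameter $r+1$ (say): if $|\ff|\ge c(r+1)\binom{n-r-1}{k-r-1}$, then there is an intersecting $j$-junta $\mathcal J$ with $j\le j(r+1)$ and $|\ff\setminus\mathcal J|\le c(r+1)\binom{n-r-1}{k-r-1}$. If instead $|\ff|< c(r+1)\binom{n-r-1}{k-r-1}$, then for $n>Ck$ with $C$ large this is already at most $C\binom{n-r}{k-r}$ and we are done; so assume the junta conclusion holds. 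The heart of the matter is to analyze what an \emph{intersecting} $j$-junta with $j$ bounded can look like and to use the degree-regularity of $\ff$ to pin down its contribution.

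The key structural step is the following. Let $J^*\subset 2^J$ be the core of the junta, so $\mathcal J=\{F: F\cap J\in J^*\}$, and $|J|=j\le j(r+1)$. Since $\mathcal J$ is intersecting, $J^*$ is an intersecting family on the at most $j$-element set $J$; in particular no element of $J^*$ can be empty, and for any two members $A,B\in J^*$ we have $A\cap B\neq\emptyset$. The crucial observation is a dichotomy for each $A\in J^*$ according to $|A|$: if $|A|=1$, say $A=\{x\}$, the sets counted through $A$ are (essentially) all $k$-sets meeting $J$ in exactly $\{x\}$, of which there are on the order of $\binom{n-j}{k-1}\sim\binom{n-1}{k-1}$; if $|A|\ge 2$, the corresponding count is at most $\binom{j}{2}\binom{n}{k-2}=O_r(\binom{n-2}{k-2})$, which is $O(\binom{n-r}{k-r})$-negligible precisely when $n\gg k$. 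Writing $|\ff|\le |\mathcal J\cap\ff| + |\ff\setminus\mathcal J|$ and bounding the first term by summing over $A\in J^*$, we see that up to an error that is $O(\binom{n-r}{k-r})$ (using $j$ bounded and $n>Ck$), the size of $\ff$ is governed by the singletons in $J^*$. Thus either $J^*$ contains no singleton, in which case $|\ff|=O_r(\binom{n-2}{k-2})$ and we are done by choosing $C$ large; or $J^*$ contains $t\ge 1$ singletons $\{x_1\},\dots,\{x_t\}$.

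Now regularity enters. If $\{x_1\}\in J^*$, then $\delta(x_1)\ge (1-o(1))\binom{n-1}{k-1}$ (it contains all $k$-sets meeting $J$ exactly in $x_1$, minus the small correction $|\ff\setminus\mathcal J|$, but in fact we only need the lower bound on the junta part, which is valid up to a set of size at most $c(r+1)\binom{n-r-1}{k-r-1}$). By regularity, \emph{every} element $y\in[n]$ has this same degree $\delta$. But elements $y\notin J$ can only be heavy by lying in many sets of $\mathcal J$, and each such set meets $J$ in some $A\in J^*$; summing $\sum_{y\in[n]}\delta(y)=k|\ff|$ and comparing with the contribution bound forces a contradiction unless $t=1$. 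More carefully: if $t\ge 2$ then the singletons $\{x_1\}$ and $\{x_2\}$ cannot both be in $J^*$ while $J^*$ is intersecting—a one-element set and another one-element set are disjoint—so actually $t\le 1$ automatically from the intersecting property of $J^*$. Hence $J^*$ has at most one singleton $\{x\}$; all other members have size $\ge 2$. Therefore $|\mathcal J\cap\ff|\le \binom{n-1}{k-1} + O_r(\binom{n-2}{k-2})$, whence $\delta(x)\le \binom{n-1}{k-1}+O_r(\binom{n-2}{k-2})$ while every $y\ne x$ has $\delta(y)=O_r(\binom{n-2}{k-2})$ (it lies only in members of $\mathcal J$ through the non-singleton part of $J^*$, plus $\ff\setminus\mathcal J$). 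Regularity $\delta(x)=\delta(y)$ then forces $\binom{n-1}{k-1}=O_r(\binom{n-2}{k-2})$, i.e. $n-k+1=O_r(1)$, contradicting $n>Ck$ for $C$ large—\emph{unless} in fact $J^*$ has no singleton at all, the case already handled. In all cases $|\ff|=O_r(\binom{n-r}{k-r})$, and absorbing the implied constant into $C=C(r)$ completes the proof.

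The main obstacle I expect is making the ``up to an error of size $|\ff\setminus\mathcal J|$'' bookkeeping genuinely rigorous: one must be careful that the small exceptional set $\ff\setminus\mathcal J$ does not secretly carry enough degree on some element $y\ne x$ to restore regularity without contradiction. This is handled because $|\ff\setminus\mathcal J|\le c(r+1)\binom{n-r-1}{k-r-1}$, so it contributes at most that much to \emph{any} single degree, which is still $o(\binom{n-1}{k-1})$ when $n>Ck$ with $C\ge C(r)$ chosen after $c(r+1),j(r+1)$ are fixed; hence the gap between $\delta(x)$ and $\delta(y)$ survives. A secondary technical point is the precise count of $k$-sets meeting $J$ in a prescribed subset, but that is an elementary binomial estimate of the form $\binom{j}{|A|}^{\!-1}$-free: the number of $F\in\binom{[n]}{k}$ with $F\cap J=A$ is $\binom{n-j}{k-|A|}$, and comparing $\binom{n-j}{k-1}$ to $\binom{n-1}{k-1}$ (ratio $1+O_j(k/n)$) and to $\binom{n-r}{k-r}$ suffices.
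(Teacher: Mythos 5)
Your high-level plan --- apply Theorem~\ref{thmdf} and then use regularity to control the degrees of the junta center --- is the same as the paper's, but the execution has two genuine flaws. First, the direction of the junta approximation: from $\{x_1\}\in\mathcal J^*$ you conclude $\delta(x_1)\ge(1-o(1))\binom{n-1}{k-1}$ because $\mathcal J$ contains all $k$-sets meeting $J$ exactly in $x_1$. But Theorem~\ref{thmdf} only bounds $|\ff\setminus\mathcal J|$, i.e.\ it says $\ff$ is almost contained in $\mathcal J$; it gives no control on $|\mathcal J\setminus\ff|$. Its hypothesis only requires $|\ff|\ge c(r)\binom{n-r}{k-r}$, which is a vanishing fraction of $\binom{n-1}{k-1}$ when $n\gg k$, so $\ff$ may be a tiny subfamily of $\mathcal J$ and $\delta(x_1)$ can be far below $\binom{n-1}{k-1}$. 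The contradiction you derive in the singleton case therefore does not exist. Second, your binomial comparison is backwards: you claim $\binom{j}{2}\binom{n}{k-2}=O_r\bigl(\binom{n-2}{k-2}\bigr)$ is ``$O(\binom{n-r}{k-r})$-negligible precisely when $n\gg k$.'' In fact $\binom{n-r}{k-r}$ \emph{decreases} as $r$ grows, and
\begin{align*}
\frac{\binom{n-2}{k-2}}{\binom{n-r}{k-r}}=\prod_{i=2}^{r-1}\frac{n-i}{k-i}\;\longrightarrow\;\infty
\end{align*}
as $n/k\to\infty$ for $r\ge 3$. So even a repaired version of your argument (using regularity only for the upper bound $\delta(x)=k|\ff|/n$ on the singleton's degree) yields at best $|\ff|=O_r\bigl(\binom{n-2}{k-2}\bigr)$, which does not prove the theorem for $r\ge 3$: the terms you discard as negligible dominate the target bound.

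The paper's proof shows that no structural analysis of $\mathcal J^*$ is needed, and this is exactly what rescues the quantitative claim. Once $|\ff|>C\binom{n-r}{k-r}$ with $C\ge 2c(r)$, we have $|\ff\cap\mathcal J|>|\ff|/2$, and since $\mathcal J$ is intersecting every set of $\ff\cap\mathcal J$ meets $J$; hence some element of $J$ has degree at least $|\ff\cap\mathcal J|/j>|\ff|/(2j)$. Regularity forces every degree to equal the average $\tfrac kn|\ff|<|\ff|/C$, which is a contradiction as soon as $C>2j(r)$. The full strength of the bound $\binom{n-r}{k-r}$ thus comes directly from the error term of Theorem~\ref{thmdf}, not from counting the junta itself. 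I recommend you replace the case analysis on $\mathcal J^*$ by this averaging argument.
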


Moreover, it is possible to strengthen the theorem above to the following setting. We say that the family is {\it $\alpha$-irregular} for $\alpha\ge 1$, if the ratio between the maximal degree and the minimal degree is at most $\alpha$.

\begin{theorem}\label{thmmain2} For any $r\in \mathcal N$ and $\alpha\in \mathbb R_{\ge 1}$ there exists a constant $C=C(r,\alpha)$, such that for $n>Ck$ any $\alpha$-irregular family $\ff\subset {[n]\choose k}$ satisfies
$$|\ff|\le C {n-r\choose k-r}.$$
\end{theorem}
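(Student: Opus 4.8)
The plan is to argue by contradiction using the Dinur--Friedgut junta approximation (Theorem~\ref{thmdf}). Fix $r$ and $\alpha$, let $C=C(r,\alpha)$ be a constant to be pinned down at the end, suppose $n>Ck$ and that $\ff\subset\binom{[n]}{k}$ is an $\alpha$-irregular intersecting family with $|\ff|>C\binom{n-r}{k-r}$, and aim for a contradiction (if $\ff=\emptyset$ there is nothing to prove). First I would record an a priori bound that clears away the degenerate range of $k$: fixing any $F_0\in\ff$, every member of $\ff$ meets $F_0$, so $|\ff|\le\sum_{x\in F_0}\delta(x)\le k\,\Delta(\ff)\le\alpha k\min_{x\in[n]}\delta(x)\le\alpha k\cdot\frac{k|\ff|}{n}$, where the last two steps use $\alpha$-irregularity and $\sum_x\delta(x)=k|\ff|$. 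Hence $n\le\alpha k^2$, which together with $n>Ck$ forces $k>C/\alpha$; taking $C$ large in terms of $r,\alpha,j(r)$ we may therefore assume $j(r)<k<n/2$, $k\ge r$ and $n-j(r)\ge 2k$, so that Theorem~\ref{thmdf} applies and all binomials below are meaningful.

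Next I would apply Theorem~\ref{thmdf} with parameter $r$: since $C\ge c(r)$ we have $|\ff|\ge c(r)\binom{n-r}{k-r}$, so there is an intersecting $j$-junta $\mathcal J$ with center $J$, $|J|\le j(r)$, and $|\ff\setminus\mathcal J|\le D:=c(r)\binom{n-r}{k-r}$. Write $\mathcal J^*\subset 2^J$ for its core; since $\mathcal J$ is intersecting and $n-|J|\ge n-j(r)\ge 2k$, the empty set is not in $\mathcal J^*$ (otherwise $\mathcal J$ would contain two disjoint $k$-subsets of $[n]\setminus J$), so $|P|\ge 1$ for every $P\in\mathcal J^*$. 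The key step is to extract one element of very large degree. Put $\G:=\ff\cap\mathcal J$, so $|\G|\ge|\ff|-D$, and note $F\cap J\in\mathcal J^*$ for every $F\in\G$; then $\sum_{x\in J}\delta_\G(x)=\sum_{F\in\G}|F\cap J|\ge|\G|\ge|\ff|-D$, so averaging over the $\le j(r)$ elements of $J$ yields some $x^*\in J$ with $\delta_\ff(x^*)\ge\delta_\G(x^*)\ge(|\ff|-D)/j(r)$, whence $\Delta(\ff)\ge(|\ff|-D)/j(r)$. Meanwhile $\sum_{x\in[n]}\delta_\ff(x)=k|\ff|$ gives $\min_x\delta_\ff(x)\le k|\ff|/n$, and $\alpha$-irregularity gives $\Delta(\ff)\le\alpha\min_x\delta_\ff(x)$. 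Chaining these three inequalities, $(|\ff|-D)/j(r)\le\alpha k|\ff|/n$; as $n>Ck$ and $C\ge 2\alpha j(r)$, this rearranges to $|\ff|\le 2D=2c(r)\binom{n-r}{k-r}$, contradicting $|\ff|>C\binom{n-r}{k-r}$ once $C\ge 2c(r)$. So any $C=C(r,\alpha)$ exceeding a suitable function of $r$, $\alpha$, $c(r)$ and $j(r)$ works; the case $\alpha=1$ reproves Theorem~\ref{thmmain1}.

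The only substantial ingredient is Theorem~\ref{thmdf}; the rest is elementary double counting. The one subtlety---what I would flag as the main thing to get right---is that Theorem~\ref{thmdf} bounds $|\ff\setminus\mathcal J|$ but says nothing about $|\mathcal J\setminus\ff|$, so the lower bound on $\Delta(\ff)$ has to come from $\G=\ff\cap\mathcal J$ rather than from $\mathcal J$ directly; conveniently the argument never needs any structural information about the junta core beyond $|P|\ge 1$, so trivial and nontrivial juntas are handled uniformly and no case analysis is required.
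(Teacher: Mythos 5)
Your proposal is correct and follows essentially the same route as the paper: apply the Dinur--Friedgut junta theorem, observe that every set of $\ff\cap\mathcal J$ meets the small center $J$ so some element of $J$ has degree at least $(|\ff|-D)/j(r)$, compare with the average degree $k|\ff|/n$, and derive a contradiction with $\alpha$-irregularity for $C=\max\{2\alpha j(r),2c(r)\}$. The extra care you take (the a priori bound $n\le\alpha k^2$ to ensure $k>j(r)$ so Theorem~\ref{thmdf} applies, and the observation that $\emptyset\notin\mathcal J^*$ so members of $\ff\cap\mathcal J$ really do meet $J$) just fills in details the paper treats as clear.
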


The next result uses algebraic methods, and may be applied for all values of $n$ and $k$. However, it works only for regular families and is weaker than Theorem~\ref{thmmain1} in the range when Theorem~\ref{thmmain1} may be applied.

\begin{theorem}\label{thm:bnd_n_small_intro}
  A regular $k$-uniform intersecting family on $[n]$ satisfies
  \begin{align*}
    |\scrF| \leq \frac{\binom{n}{k}}{1 + \frac{(n-k)(n-k-1)(n-k-2)}{k(k-1)(k-2)}}.
  \end{align*}
\end{theorem}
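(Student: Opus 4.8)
The plan is to use an inclusion–exclusion / double-counting argument combined with the regularity hypothesis, exploiting the fact that an intersecting family cannot contain too many pairwise-disjoint "missing" configurations. More precisely, I would look at the complements. For $\scrF \subset \binom{[n]}{k}$ intersecting, no set and its complement are both relevant in the usual way; instead, the key observation is that for every $F \in \scrF$ and every $3$-subset $T$ of the complement $[n]\setminus F$, the "switched" set is not forced to be in $\scrF$, but we want to count triples of elements that simultaneously \emph{avoid} many members. The cleanest route: count pairs $(F, T)$ where $F \in \scrF$ and $T \subset [n] \setminus F$ with $|T| = 3$. On one hand this equals $|\scrF| \binom{n-k}{3}$. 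On the other hand, for a fixed $3$-set $T$, the number of $F \in \scrF$ with $F \cap T = \emptyset$ is $|\scrF| - |\{F : F \cap T \neq \emptyset\}|$, and I want a lower bound on how often $T$ meets $\scrF$.

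The heart of the argument is: if $\scrF$ is intersecting, then for any fixed set $S$, the family $\scrF$ restricted to sets disjoint from $S$ is still intersecting, but more importantly, regularity forces the degree of every element to be exactly $\delta = k|\scrF|/n$. Then for a single element $x$, the number of $F \in \scrF$ avoiding $x$ is $|\scrF| - \delta = |\scrF|(1 - k/n) = |\scrF|(n-k)/n$. For a $2$-set $\{x,y\}$, by inclusion–exclusion the number of $F$ avoiding both is $|\scrF| - 2\delta + \delta_2$ where $\delta_2 = |\{F : \{x,y\} \subset F\}|$; and for a $3$-set similarly with $\delta_3$. The crucial point is that the family of $F \in \scrF$ avoiding a fixed $3$-set $T$ is \emph{itself intersecting}, and — this is where I would need the key structural input — one shows that these "co-degree" counts cannot all be small: summing the three-element co-degrees $\delta_3$ over all triples and using that $\sum_{T} \delta_3(T) = |\scrF|\binom{k}{3}$ while $\sum_T (\text{number of } F \text{ meeting } T)$ can be bounded, one extracts that $|\scrF| \binom{n-k}{3} \le$ (something like) $\binom{n}{3} \cdot \frac{|\scrF|}{\text{(denominator)}}$ after rearrangement. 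Concretely I expect the inequality to come out as: the total count of disjoint $(F,T)$ pairs, namely $|\scrF|\binom{n-k}{3}$, is at most $\left(\binom{n}{k} - |\scrF|\right)\binom{k}{3}$ or a close variant, because each $k$-set \emph{not} in $\scrF$ can be "charged" by the triples it supplies — using that $\scrF$ is intersecting so a set disjoint from some $F \in \scrF$ is not in $\scrF$. Rearranging $|\scrF|\binom{n-k}{3} \le (\binom{n}{k} - |\scrF|)\binom{k}{3}$ gives exactly $|\scrF| \le \binom{n}{k} / \left(1 + \frac{(n-k)(n-k-1)(n-k-2)}{k(k-1)(k-2)}\right)$.

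So the concrete steps are: (1) set up the bipartite count of pairs $(F,T)$ with $F\in\scrF$, $T\in\binom{[n]\setminus F}{3}$, giving the left side $|\scrF|\binom{n-k}{3}$; (2) observe that if $T$ is disjoint from some member of $\scrF$ then by the intersecting property the unique... no — rather, argue that distinct pairs $(F,T)$ and $(F',T)$ with the same $T$ still only let us conclude about sets disjoint from $T$; the right move is to injectively (or with bounded multiplicity) map each such pair to a $k$-set outside $\scrF$; (3) verify the multiplicity bound: a fixed $k$-set $G \notin \scrF$ is the image of at most $\binom{k}{3}$ pairs (choosing which $3$ of its elements play the role of $T$, with $F = $ something determined), so the count of pairs is at most $\binom{k}{3}\left(\binom{n}{k} - |\scrF|\right)$; (4) combine and rearrange. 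The main obstacle is step (2)–(3): finding the right assignment from a disjoint pair $(F,T)$ to a non-member $k$-set and controlling its fibres. The natural candidate is $G = (F \setminus \{a,b,c\}) \cup T$ for some chosen $\{a,b,c\} \subset F$ — but then one must check $G \notin \scrF$, which uses that $G$ is "close to being disjoint" from members of $\scrF$, and this is exactly where the intersecting hypothesis and a careful choice (perhaps averaging over the choice of $\{a,b,c\}$, or using regularity to guarantee enough room) must be deployed. I would expect the clean version to replace the naive injection by a \emph{weighted} double count: assign weight $1/\binom{k}{3}$ appropriately so that regularity makes the fibre sizes exact rather than merely bounded, which is likely why the hypothesis of regularity (not just intersecting) is essential and why the bound has this precise rational form.
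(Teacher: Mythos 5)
You have correctly reverse-engineered the inequality that is equivalent to the stated bound: $|\scrF|\binom{n-k}{3}\le \binom{k}{3}\bigl(\binom{n}{k}-|\scrF|\bigr)$ rearranges to the theorem, since $\binom{n-k}{3}/\binom{k}{3}=\frac{(n-k)(n-k-1)(n-k-2)}{k(k-1)(k-2)}$. But there is a genuine gap at steps (2)--(3), exactly where you flag it, and I do not see how to close it in the form proposed. The candidate image $G=(F\setminus\{a,b,c\})\cup T$ meets $F$ in $k-3$ elements, so the intersecting hypothesis says nothing about whether $G\in\scrF$; the only $k$-sets that the intersecting property excludes from $\scrF$ are those disjoint from some member, and a $3$-set $T$ disjoint from $F$ does not canonically produce one. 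A second, structural problem: regularity never genuinely enters your count. You use only $\delta=k|\scrF|/n$, which is the \emph{average} degree of any $k$-uniform family, and the co-degrees $\delta_2,\delta_3$ you invoke are not determined by $1$-regularity. Yet regularity must do real work: the trivial star of size $\binom{n-1}{k-1}$ exceeds the claimed bound whenever $(n-k-1)(n-k-2)>(k-1)(k-2)$, i.e.\ for all $n>2k$, so no argument using only ``intersecting plus average degree'' can succeed.

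The paper's actual proof (Theorem \ref{thm:bnd_n_small} with $s=1$) is spectral: a Hoffman-type ratio bound in the Johnson scheme. Regularity is used to show that the characteristic vector $\chi$ is orthogonal to the eigenspace $V_1$ (Lemma \ref{lem:char_F}); the intersecting property gives $\chi^T A_k\chi=0$; applying the ratio argument to $A_k-P_{1k}E_1$, whose relevant smallest eigenvalue becomes $P_{3k}=-\binom{n-k-3}{k-3}$ instead of $P_{1k}=-\binom{n-k-1}{k-1}$, yields $|\scrF|\le v/(1-P_{0k}/P_{3k})$, which coincides with your target inequality because $\binom{n-k}{k}/\binom{n-k-3}{k-3}=\binom{n-k}{3}/\binom{k}{3}$. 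A purely combinatorial charging proof would amount to rederiving this eigenvalue inequality by an injection with fibres of size exactly $\binom{k}{3}$; the paper's equality analysis (equality forces $\chi\in\langle\vj\rangle+V_3$, and the resulting intersection numbers are typically non-integral) strongly suggests no such exact bounded-multiplicity map exists in general. The missing ingredient in your outline is precisely the linear-algebraic machinery of Section \ref{sec:prelim}.
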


We only know sporadic examples for which the bound is tight. We also remark that it is not difficult to see that the bound in Theorem~\ref{thm:bnd_n_small_intro} is always stronger than the bound from Proposition~\ref{prop1}.

If $n=ck$ for some constant $c$, then all the upper bounds we give have order $\Theta({n-1\choose k-1})$. This is in sharp contrast with the situation for symmetric intersecting families, for which in \cite{Ellis2017} the upper bounds have order $o({n-1\choose k-1})$ for $n-2k=\Theta(k/\log k)$. However, this is not a shortcoming of our methods: for any $c$ we give a series of examples of a regular intersecting families in ${[n]\choose k}$, such that $n>ck$ and that have size $\Theta({n-1\choose k-1})$.
Therefore, the following is true.

\begin{proposition}\label{prop3} If $k=o(n)$, then any regular intersecting family in ${[n]\choose k}$ has size $o({n-1\choose k-1})$. At the same time, for any $c>0$ there are regular intersecting families in ${[n]\choose k}$ with $n>ck$ and which have size $\Theta_c({n-1\choose k-1})$.
\end{proposition}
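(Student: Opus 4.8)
The plan is to prove the two halves separately. The first is an immediate consequence of Proposition~\ref{prop1}; the second needs an explicit construction, and its only delicate point is arranging \emph{exact} regularity.

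\smallskip
\textbf{First statement.} Suppose $k=o(n)$; then $n\ge 2k$ for all large $n$, so Proposition~\ref{prop1} gives $|\ff|\le 3\binom{n-3}{k-2}+\binom{n-3}{k-3}$ for every regular intersecting $\ff\subset\binom{[n]}{k}$. Since
\[
\frac{\binom{n-3}{k-2}}{\binom{n-1}{k-1}}=\frac{(k-1)(n-k)}{(n-1)(n-2)}=O\!\Big(\tfrac kn\Big)\to 0,\qquad
\frac{\binom{n-3}{k-3}}{\binom{n-1}{k-1}}=\frac{(k-1)(k-2)}{(n-1)(n-2)}=O\!\Big(\tfrac{k^2}{n^2}\Big)\to 0,
\]
we conclude $|\ff|=o\big(\binom{n-1}{k-1}\big)$. (Equivalently one may invoke Theorem~\ref{thmmain1} with $r=2$, valid once $n>C(2)k$, which holds eventually.)

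\smallskip
\textbf{Second statement, the size.} Fix $c>0$. If $c<2$ we may simply take $n=2k-1$ and $\ff=\binom{[n]}{k}$, which is symmetric, intersecting, has $n>ck$ for large $k$, and $|\ff|=\Theta\big(\binom{n-1}{k-1}\big)$. So assume $c\ge 2$, take $n=\lceil ck\rceil+1$ (hence $n>ck$ and $n\ge 2k$), fix a $3$-element set $A=\{1,2,3\}\subset[n]$, and set
\[
\ff_0:=\Big\{F\in\tbinom{[n]}{k}:\ |F\cap A|\ge 2\Big\}.
\]
This is intersecting because $|F\cap A|+|F'\cap A|\ge 4>3=|A|$ forces $F\cap F'\cap A\neq\varnothing$, and, exactly as in the first part,
\[
|\ff_0|=3\binom{n-3}{k-2}+\binom{n-3}{k-3}=\Big(\tfrac{3c-2}{c^{2}}+o(1)\Big)\binom{n-1}{k-1}=\Theta_c\!\Big(\binom{n-1}{k-1}\Big).
\]
Thus the size is easy for every $c$; the work lies in regularity.

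\smallskip
\textbf{Second statement, regularity — the main obstacle.} By the $S_A\times S_{[n]\setminus A}$-invariance of $\ff_0$, the three points of $A$ share a common degree $d_A$ and all remaining points a common degree $d_R$, and an elementary count shows $d_A/d_R\to\rho(c)$ for a constant $\rho(c)>1$; hence $\ff_0$ is only \emph{boundedly} irregular, not regular. The plan is to pass to an exactly regular subfamily $\ff\subseteq\ff_0$ of the same order: delete a sub-collection $\ff'\subseteq\ff_0$ so that each point of $A$ loses exactly $d_A-\delta$ in degree and each other point exactly $d_R-\delta$, taking $\delta$ as large as possible. Since every member of $\ff_0$ meets $A$ in at most $3$ points whereas $\ff'$ must carry a two-valued degree sequence skewed towards $A$, the largest admissible $\delta$ is still a positive constant multiple of $d_R$ (with the constant depending on $c$), so $|\ff|=n\delta/k=\Theta_c\big(\binom{n-1}{k-1}\big)$; moreover $\ff$ is regular and still intersecting, as deletion preserves the latter. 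The one genuinely non-trivial point is the \emph{existence} of a subfamily $\ff'\subseteq\ff_0$ realising the prescribed degree sequence; this is a degree-prescription/equitable-splitting statement that can be obtained by sampling $\ff'$ from a suitable product distribution on $\ff_0$ and then correcting the lower-order discrepancies, or by a Baranyai/flow-type argument. Establishing this regularization step — rather than the size estimate, which is immediate — is where essentially all the effort goes.
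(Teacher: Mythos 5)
Your first half is correct and is essentially the paper's (implicit) argument: Proposition~\ref{prop1} applies once $n\ge 2k$, and the ratios $\binom{n-3}{k-2}/\binom{n-1}{k-1}=O(k/n)$ and $\binom{n-3}{k-3}/\binom{n-1}{k-1}=O(k^2/n^2)$ tend to $0$ when $k=o(n)$.

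The second half has a genuine gap, and you have located it yourself: the ``regularization step'' is the entire content of the claim, and you do not carry it out. Your family $\ff_0=\{F:|F\cap A|\ge 2\}$ is only boundedly irregular, and to finish you must exhibit a subfamily $\ff'\subseteq\ff_0$ whose degree is exactly $d_A-\delta$ on each of the three points of $A$ and exactly $d_R-\delta$ on each of the other $n-3$ points, with $\delta=\Theta_c(d_R)$. Asserting that this follows ``by sampling from a product distribution and correcting discrepancies, or by a Baranyai/flow-type argument'' is not a proof: a random subfamily has degree fluctuations of order $\sqrt{d_R}$ on $n-3$ coordinates simultaneously, and making all of them exactly equal while staying inside $\ff_0$ is precisely the kind of switching argument the paper has to work for in the far more constrained case $n=2k$ (Lemma~\ref{lem666}); you also never address the integrality constraints (e.g.\ $n\delta=k|\ff|$, and the two prescribed degree decrements must be realizable by sets meeting $A$ in $2$ or $3$ points). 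The paper avoids all of this with a construction that is exactly regular by design: take a projective plane of order $q$ with $(q^2+q+1)/(q+1)>c$ on a ground set $X$, take $\G=\binom{Z}{l(q+1)}$ on a disjoint set $Z$ of size $l(q^2+q+1)$ (so the two ratios $k/n$ coincide), and form the sum family $\{F\cup G\}$; regularity and the intersecting property are then automatic, and a direct binomial estimate gives size $\Theta_q\big(\binom{n}{k}\big)$. As written, your proof of the second statement is incomplete; to salvage your route you would need to actually prove the degree-prescription lemma, at which point your argument would be doing strictly more work than the paper's.
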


We note that a similar construction was used in \cite[Theorem 3]{Furedi1981A} by F\"uredi and \cite[Theorem 2.4]{Frankl1986} by Frankl and F\"uredi. However, their studies were concerned with intersecting families with bounded maximum degree and they did not require the families to be regular. In particular, the lower bounds they prove resemble Theorem~\ref{thmmain1}, but are only meaningful for $n\gg k^2$, which is by Theorem~\ref{thm:char_n_max} not interesting as long as regular intersecting families are considered.


The structure of the paper is as follows. In Section~\ref{sec:prelim} we review some notions from the theory of association schemes, needed in the proofs of Theorems~\ref{thm:char_n_max} and \ref{thm:bnd_n_small_intro}. In Section~\ref{sec3} we prove the upper bounds on the size of regular intersecting families and some of its generalizations, in particular, to the case of degrees of subsets. In the first part of Section~\ref{sec4} we provide some general constructions of regular intersecting families, which in particular imply the corresponding part of Proposition~\ref{prop3}. In the second part of Section~\ref{sec4} we study in detail the case $n=2k$, and give some results in the case $n=2k+1$. Finally, in Section~\ref{sec5} we conclude and give future directions for research.

\section{The Johnson Scheme}\label{sec:prelim}

In the proof of Theorems~\ref{thm:char_n_max} and \ref{thm:bnd_n_small_intro} we use the theory of association schemes.
To make this paper more self-contained, this section summarizes the necessary notions and results.
We refer to Delsarte's PhD thesis \cite{Delsarte1973} as a standard reference on
the combinatorial applications of association schemes.

\begin{definition}
 Let $X$ be a finite set. A $k$-class association scheme is a pair $(X, \scrR)$,
 where $\scrR = \{ R_0, \ldots, R_k \}$ is a set of symmetric binary relations
 on $X$ with the following properties:
 \begin{enumerate}[(a)]
  \item $\{ R_0, \ldots R_k \}$ is a partition of $X \times X$.
  \item $R_0$ is the identity relation.
  \item There are constants $p_{ij}^\ell$ such that for $x, y \in X$ with $(x, y) \in R_\ell$
  there are exactly $p_{ij}^\ell$ elements $z$ with $(x, z) \in R_i$ and $(z, y) \in R_j$.
 \end{enumerate}
\end{definition}
We denote $p^0_{ii}$ by $r_i$, the \text{valency of $R_i$}. We denote $|X|$ by
$v = \sum_{i=0}^k r_i$. The relations $R_i$ can be described by their adjacency matrices
$A_i \in \setC^{v \times v}$ defined by
\begin{align*}
  (A_i)_{xy} = \begin{cases}
                1 & \text{ if } x R_i y,\\
                0 & \text{ otherwise.}
               \end{cases}
\end{align*}
Let $\vJ$ denote the all-ones matrix and let $\vj$ denote the all-ones vector.
It is easily verified that the matrices $A_i$ are Hermitian and commute pairwise,
hence we can diagonalize them simultaneously, i.e. there is a set of common eigenvectors.
From this we obtain pairwise orthogonal, idempotent Hermitian
matrices $E_j \in \setC^{v \times v}$ with the properties (possibly after reordering)
\begin{align}
  &\sum_{j=0}^k E_j = \vI, && E_0 = v^{-1} \vJ,\notag\\
  &A_i = \sum_{j=0}^k P_{ji} E_j, && E_j = v^{-1} \sum_{i=0}^k Q_{ij} A_i \label{eq:decomp_adj}
\end{align}
for some constants $P_{ij}$ and $Q_{ij}$.
From this it is clear that the
matrices $A_0, A_1, \ldots, A_k$ have $k+1$ common eigenspaces $V_0 = \langle \vj \rangle,
V_1, \ldots, V_k$, where the eigenspaces $V_j$ has dimension $f_j := \rank(E_j)=\trace(E_j)$
and $P_{ji}$ is the eigenvalue of  $A_i$ in the eigenspace $V_j$.
Note that $E_j$ is an orthogonal projection from $\setC^{v \times v}$
onto $V_j$. There are several connections between the parameters of an association
scheme, for example $r_i = P_{0i}$. For us the following is important.

\begin{lemma}[{\cite[Eq. (4.34)]{Delsarte1973}}]\label{lem:rel_P_Q}
  We have $r_i Q_{ij} = f_j P_{ji}$.
\end{lemma}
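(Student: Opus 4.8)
The plan is to compute the trace $\trace(A_i E_j)$ in two different ways, using the two expansions in~\eqref{eq:decomp_adj}, and then equate the results. The key algebraic facts I will rely on are that the idempotents are pairwise orthogonal, so that $E_\ell E_j = \delta_{\ell j} E_j$ and $\trace(E_j) = \rank(E_j) = f_j$, together with an orthogonality relation for the adjacency matrices, namely $\trace(A_i A_\ell) = \delta_{i\ell}\, v r_i$.

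For the first computation I would substitute $A_i = \sum_{\ell=0}^k P_{\ell i} E_\ell$ and use the orthogonality of the idempotents:
\begin{align*}
  \trace(A_i E_j) = \sum_{\ell=0}^k P_{\ell i}\, \trace(E_\ell E_j) = P_{ji}\, \trace(E_j) = f_j P_{ji}.
\end{align*}
For the second computation I would instead substitute $E_j = v^{-1} \sum_{\ell=0}^k Q_{\ell j} A_\ell$, which gives
\begin{align*}
  \trace(A_i E_j) = v^{-1} \sum_{\ell=0}^k Q_{\ell j}\, \trace(A_i A_\ell) = v^{-1} Q_{ij} \cdot v r_i = r_i Q_{ij},
\end{align*}
where the middle equality uses the orthogonality relation for the $A_\ell$. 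Equating the two expressions yields $f_j P_{ji} = r_i Q_{ij}$, which is the claim.

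The one ingredient that genuinely needs justification — and the main obstacle — is the orthogonality relation $\trace(A_i A_\ell) = \delta_{i\ell}\, v r_i$. To establish it I would write $\trace(A_i A_\ell) = \sum_{x}\sum_{z} (A_i)_{xz} (A_\ell)_{zx}$. Since each relation $R_\ell$ is symmetric we have $(A_\ell)_{zx} = (A_\ell)_{xz}$, so the summand equals $1$ exactly when the pair $(x,z)$ lies in both $R_i$ and $R_\ell$. As $\{R_0, \ldots, R_k\}$ partitions $X \times X$, this forces $i = \ell$; and in that case the number of pairs $(x,z) \in R_i$ is $v r_i$, because each of the $v$ points $x$ is related to exactly $r_i$ points $z$. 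This yields the claimed relation and completes the argument.
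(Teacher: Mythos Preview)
Your proof is correct. The paper itself does not supply a proof of this lemma --- it merely cites the identity from Delsarte's thesis --- so there is nothing to compare against beyond noting that your double evaluation of $\trace(A_i E_j)$ is the standard argument for this relation.
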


Let $\scrF$ be a subset of $X$.
Let $\chi$ be the \textit{characteristic vector} of $\scrF$, i.e.
\begin{align*}
  \chi_x = \begin{cases}
	      1 & \text{ if } x \in \scrF,\\
	      0 & \text{ otherwise. }
           \end{cases}
\end{align*}
Define the \textit{inner distribution} $a$ of $\scrF$, $|\scrF| > 0$, by
\begin{align*}
  a_i = \frac{1}{|\scrF|} \chi^T A_i \chi.
\end{align*}

The essential property of association schemes, which we use, is summarized in
the following result which is often referred to as \textit{Delsarte's linear
programming bound (LP bound)}.

\begin{theorem}[{\cite[Lemma 2.5.1 (iv) and Prop. 2.5.2]{Brouwer1989}}]\label{thm:lp_bound}
  We have $(aQ)_j = \frac{v}{|\ff|}\chi^T E_j \chi \geq 0$ with equality if
  and only if $\chi \in V_j^\perp$.
\end{theorem}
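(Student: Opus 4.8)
The plan is to prove the two claimed identities separately and then read off the nonnegativity and the equality condition. The first task is to establish the algebraic identity $(aQ)_j = \frac{v}{|\scrF|}\chi^T E_j \chi$. Starting from the definition of the inner distribution, I would write $(aQ)_j = \sum_{i=0}^k a_i Q_{ij} = \frac{1}{|\scrF|}\sum_{i=0}^k Q_{ij}\, \chi^T A_i \chi$. Since $\chi^T(\cdot)\chi$ is linear in the matrix argument, this equals $\frac{1}{|\scrF|}\chi^T\bigl(\sum_{i=0}^k Q_{ij} A_i\bigr)\chi$. The bracketed sum is exactly $v E_j$ by the second display in equation~\eqref{eq:decomp_adj} (namely $E_j = v^{-1}\sum_{i=0}^k Q_{ij} A_i$), so $\sum_i Q_{ij}A_i = v E_j$ and hence $(aQ)_j = \frac{v}{|\scrF|}\chi^T E_j \chi$, as claimed.

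The second, and genuinely substantive, step is to show $\chi^T E_j \chi \ge 0$, which will also hand us the equality condition for free. Here I would use the fact, recorded just before Lemma~\ref{lem:rel_P_Q}, that $E_j$ is a Hermitian idempotent realizing the orthogonal projection onto the eigenspace $V_j$. Thus $E_j = E_j^2$ and $E_j^* = E_j$, so for the real vector $\chi$ we have
\begin{align*}
  \chi^T E_j \chi = \chi^T E_j^2 \chi = (E_j\chi)^* (E_j\chi) = \|E_j\chi\|^2 \ge 0.
\end{align*}
(In rewriting $\chi^T E_j$ as $(E_j \chi)^*$ I use that $E_j$ is Hermitian and $\chi$ is a real $0/1$ vector, so $\chi^T = \chi^*$.) Multiplying by the positive scalar $v/|\scrF|$ preserves the sign, giving $(aQ)_j \ge 0$.

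Finally, the equality characterization comes directly from the norm expression: since $v/|\scrF| > 0$, we have $(aQ)_j = 0$ if and only if $\|E_j\chi\|^2 = 0$, i.e. if and only if $E_j\chi = 0$. Because $E_j$ is the orthogonal projection onto $V_j$, the condition $E_j\chi = 0$ is equivalent to $\chi$ lying in the orthogonal complement $V_j^\perp$. This yields exactly the stated equivalence, ``equality if and only if $\chi \in V_j^\perp$.'' I do not anticipate a serious obstacle: the only point requiring care is the clean interpretation of $E_j$ as a Hermitian projection (so that $\chi^T E_j \chi$ rewrites as a squared norm), which is precisely the property asserted in the paragraph preceding the lemma, and the correct invocation of the expansion $E_j = v^{-1}\sum_i Q_{ij} A_i$ from~\eqref{eq:decomp_adj} in the right index order.
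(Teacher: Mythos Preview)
Your proof is correct. Note, however, that the paper does not supply its own proof of this theorem: it is quoted directly from \cite[Lemma 2.5.1 (iv) and Prop.~2.5.2]{Brouwer1989}, so there is nothing in the paper to compare your argument against. What you have written is precisely the standard derivation (substitute $a_i=\tfrac{1}{|\scrF|}\chi^T A_i\chi$, use $E_j=v^{-1}\sum_i Q_{ij}A_i$ from~\eqref{eq:decomp_adj}, and then exploit that $E_j$ is a Hermitian idempotent so $\chi^T E_j\chi=\|E_j\chi\|^2$), and it matches the proof in the cited reference.
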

The vector $aQ$ is often referred to as the \textit{MacWilliams transform} of $a$.
A very particular, well-known case in Delsarte's LP bound is when $\chi \in \langle \vj \rangle + V_j$
for some $j \in \{ 1, \ldots, k \}$:

\begin{lemma}\label{lem:char_single_es}
Let $\chi$ be the characteristic vector of $\scrF$.
If $\chi \in \langle \vj \rangle + V_j$, then the following holds:
\begin{enumerate}[(a)]
 \item If $F \in \scrF$, then $F$ meets exactly $|\scrF| (P_{0i} - P_{ji})/ v + P_{ji}$
elements of $\scrF$ in relation $R_{i}$.
 \item If $F \notin \scrF$, then $F$ meets exactly $|\scrF| (P_{0i} - P_{ji})/ v$
elements of $\scrF$ in relation $R_{i}$.
\end{enumerate}
\end{lemma}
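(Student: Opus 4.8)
The plan is to argue directly from the eigenspace decomposition, without invoking the full LP bound. By hypothesis we may write $\chi = \alpha\vj + w$ with $w \in V_j$ for some scalar $\alpha \in \setC$. First I would pin down $\alpha$. Since $V_0 = \langle\vj\rangle$ and $V_j$ are distinct common eigenspaces of the commuting Hermitian matrices $A_0,\ldots,A_k$, they are orthogonal, so $\vj^T w = 0$; taking the inner product of $\chi = \alpha\vj + w$ with $\vj$ then gives $|\scrF| = \chi^T\vj = \alpha\,\vj^T\vj = \alpha v$, whence $\alpha = |\scrF|/v$.

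Next I would compute the vector $A_i\chi$, whose $F$-th coordinate $(A_i\chi)_F = \sum_{G}(A_i)_{FG}\chi_G$ is exactly the number of members $G$ of $\scrF$ with $(F,G)\in R_i$, i.e. the quantity we want to evaluate. Because $\vj$ lies in the eigenspace $V_0$ of $A_i$ with eigenvalue $P_{0i}$ and $w$ lies in the eigenspace $V_j$ of $A_i$ with eigenvalue $P_{ji}$, we have $A_i\vj = P_{0i}\vj$ and $A_i w = P_{ji}w$. Substituting $w = \chi - \alpha\vj$ yields
$$A_i\chi = \alpha P_{0i}\vj + P_{ji}(\chi - \alpha\vj) = \frac{|\scrF|}{v}(P_{0i} - P_{ji})\,\vj + P_{ji}\,\chi .$$

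Finally I would read off the $F$-th coordinate of this identity. If $F \in \scrF$ then $\chi_F = 1$ and the right-hand side contributes $|\scrF|(P_{0i}-P_{ji})/v + P_{ji}$, giving part (a); if $F \notin \scrF$ then $\chi_F = 0$ and we are left with $|\scrF|(P_{0i}-P_{ji})/v$, giving part (b). I do not expect any genuine obstacle: the computation is a two-line consequence of the spectral decomposition, and the only point that deserves a word of justification is the orthogonality $\vj \perp w$, which is precisely the orthogonality of the idempotents, $E_0 E_j = 0$.
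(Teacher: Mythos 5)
Your proposal is correct and follows essentially the same route as the paper: both decompose $\chi = \frac{|\scrF|}{v}\vj + w$ with $w \in V_j$ and apply $A_i$ using its eigenvalues $P_{0i}$ on $\langle \vj\rangle$ and $P_{ji}$ on $V_j$, then read off the $F$-th coordinate (the paper phrases this as $\psi^T A_i \chi$ with $\psi$ the characteristic vector of $\{F\}$). Your explicit justification that $\alpha = |\scrF|/v$ via orthogonality of the eigenspaces is a detail the paper leaves implicit, but the argument is the same.
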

\begin{proof}
  Let $\psi$ be the characteristic vector of $\{ F \}$. Write
  $\chi = \frac{|\scrF|}{v} \vj + \chi'$ with $\chi' \in V_j$.
  Then the number of elements in $\scrF$ that are in relation $R_i$ to $F$ is
  \begin{align*}
    \psi^T A_i \chi &= \psi^T \left( \sum_{i=0}^k P_{ji} E_j \right) \Big( \frac{|\scrF|}{v} \vj + \chi'\Big) \\
    &= \psi^T \Big(P_{0i} \frac{|\scrF|}{v} \vj + P_{ji} (\chi - \frac{|\scrF|}{v} \vj)\Big) = |\scrF| (P_{0i} - P_{ji})/ v + P_{ji} \psi^T \chi. \qedhere
  \end{align*}
\end{proof}

The \textit{Johnson scheme} has $X = {[n]\choose k}$. Two $k$-sets are in relation $R_i$ if their intersection is of size $k-i$. We recall the eigenvalues of the Johnson scheme.
As this association scheme is \textit{cometric}, its eigenspaces have a canonical ordering which we use in the following.

\begin{lemma}[{\cite[p. 48]{Delsarte1973} and \cite[Theorem 6.5.2]{Godsil2015}}]\label{lem:evs}
  The eigenvalue of $A_i$ of the eigenspace $V_j$ is
  \begin{align*}
   P_{ji}
    &= \sum_{h=0}^i (-1)^{h} \binom{j}{h} \binom{k-j}{i-h} \binom{n-k-j}{i-h}\\
    &= \sum_{h=i}^k (-1)^{h-i+j} \binom{h}{i} \binom{n-2h}{k-h} \binom{n-h-j}{h-j}.
  \end{align*}
  In particular, $P_{jk} = (-1)^{j} \binom{n-k-j}{k-j}$.
\end{lemma}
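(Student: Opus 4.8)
\emph{Proof sketch.} Both displayed expressions are the classical evaluations of the Eberlein polynomials for $J(n,k)$, so the statement is known; here is the route I would take to prove it. The plan is to obtain the \emph{first} formula by exhibiting an explicit eigenvector lying in the degree-$j$ eigenspace, to read off $P_{jk}$ as an immediate special case, and to deduce the \emph{second} formula from the first by a Vandermonde-type manipulation (or by quoting the standard alternative form).

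First I would recall the canonical (cometric) realization of the eigenspaces: letting $W_h$ be the $\binom{n}{k}\times\binom{n}{h}$ inclusion matrix, $(W_h)_{S,A}=1$ iff $A\subseteq S$, one has $\langle\vj\rangle\oplus V_1\oplus\cdots\oplus V_j=\col(W_j)$ for every $j$, so that $V_j=\col(W_j)\cap\col(W_{j-1})^{\perp}$; this is exactly the ordering used in Lemma~\ref{lem:evs}, and it can be cited from Delsarte or Godsil. Then fix $2j$ distinct points $x_1,y_1,\dots,x_j,y_j\in[n]$ and define $v\in\setC^{\binom{[n]}{k}}$ by $v_S=\prod_{t=1}^{j}\big(\mathbf 1[x_t\in S]-\mathbf 1[y_t\in S]\big)$. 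Expanding the product shows $v$ is an integer combination of columns $W_j\mathbf 1_A$ with $|A|=j$, hence $v\in\col(W_j)$. To see $v\in\col(W_{j-1})^{\perp}$, i.e. $W_{j-1}^{T}v=0$, fix a $(j-1)$-set $B$; since $B$ meets at most $j-1$ of the $j$ pairs $\{x_t,y_t\}$, some pair $\{x_{t_0},y_{t_0}\}$ is disjoint from $B$, and for $S\supseteq B$ either $S$ contains both or neither of $x_{t_0},y_{t_0}$ (then $v_S=0$) or exactly one, in which case transposing $x_{t_0}\leftrightarrow y_{t_0}$ is a sign-reversing involution on the remaining $S$. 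Hence $\sum_{S\supseteq B}v_S=0$, so $v\in V_j$; and $v\neq 0$.

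Next I would compute the eigenvalue by evaluating $A_i v$ at a convenient point. Take $S_0=\{x_1,\dots,x_j,z_1,\dots,z_{k-j}\}$ with the $z_\ell$ chosen outside $\{x_t,y_t\}_t$; then $v_{S_0}=1$, so $P_{ji}=(A_i v)_{S_0}$. A $k$-set $S'$ with $|S'\cap S_0|=k-i$ contributes to $(A_i v)_{S_0}$ only if, for each $t$, it contains exactly one of $x_t,y_t$; writing $T\subseteq[j]$ for the set of indices with $x_t\in S'$ and $p=|T|$, the contribution has sign $(-1)^{j-p}$, and counting how many of the remaining $k-j$ elements of $S'$ lie among $z_1,\dots,z_{k-j}$ versus the $n-2j-(k-j)=n-k-j$ fresh points gives $\binom{k-j}{k-i-p}\binom{n-k-j}{i-j+p}$ choices. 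Thus $P_{ji}=\sum_{p}(-1)^{j-p}\binom{j}{p}\binom{k-j}{k-i-p}\binom{n-k-j}{i-j+p}$, and substituting $h=j-p$ together with $\binom{k-j}{k-i-p}=\binom{k-j}{i-h}$ and $\binom{n-k-j}{i-j+p}=\binom{n-k-j}{i-h}$ yields the first formula. Setting $i=k$: $\binom{k-j}{k-h}\neq 0$ forces $h\geq j$ while $\binom{j}{h}\neq0$ forces $h\leq j$, so only $h=j$ survives, giving $P_{jk}=(-1)^{j}\binom{n-k-j}{k-j}$. Finally, the second expression I would derive from the first by expanding the binomials in an upper-index Vandermonde identity, or, more economically, simply note that it is the familiar form of the Eberlein polynomial obtained by expressing $A_i$ in the $W_h W_h^{T}$ basis, and cite Delsarte/Godsil for the equality of the two normal forms.

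The step I expect to require the most care is pinning down that the explicit vector $v$ sits in the \emph{correctly indexed} eigenspace $V_j$ of the cometric ordering — i.e. the identification $V_j=\col(W_j)\cap\col(W_{j-1})^{\perp}$ and the fact that this is the ordering in which the formulas of Lemma~\ref{lem:evs} are written; after that, the eigenvalue computation and the passage between the two binomial forms are routine.
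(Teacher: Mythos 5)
The paper does not prove this lemma at all: it is quoted verbatim from Delsarte and from Godsil--Meagher, and the only facts actually used downstream are the first formula (fed into Lemma~\ref{lem:dual_evs} via $r_iQ_{ij}=f_jP_{ji}$) and the special case $P_{jk}=(-1)^j\binom{n-k-j}{k-j}$. Your sketch, by contrast, is a genuine standalone derivation, and I checked the computation: the vectors $v_S=\prod_t(\mathbf 1[x_t\in S]-\mathbf 1[y_t\in S])$ do lie in $\col(W_j)\cap\col(W_{j-1})^{\perp}$ (the sign-reversing involution argument for $W_{j-1}^Tv=0$ is correct), this space is indeed the $V_j$ of the cometric ordering used in the paper (consistent with Lemma~\ref{lem:char_first_es}, where the $\psi_R$ are columns of $W_r$), and the count $(-1)^{j-p}\binom{j}{p}\binom{k-j}{k-i-p}\binom{n-k-j}{i-j+p}$ with the substitution $h=j-p$ and the symmetry $\binom{k-j}{k-i-j+h}=\binom{k-j}{i-h}$ reproduces the first displayed formula exactly; the $i=k$ specialization forcing $h=j$ is also right. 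What your approach buys is a self-contained, elementary proof of everything the paper actually needs; what it costs is the second displayed expression, which you (reasonably) defer to a Vandermonde manipulation or a citation --- since that alternative form is never used elsewhere in the paper, this is not a gap in practice, but if you want the lemma as stated to be fully proved you should either carry out that binomial identity or keep the citation for that half, as the authors do for the whole statement.
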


We will use Theorem \ref{thm:lp_bound} for $j=1$ and $j=2$, so we need $Q_{i1}$
and $Q_{i2}$ explicitly.

\begin{lemma}\label{lem:dual_evs}
 We have
 \begin{enumerate}[(a)]
  \item $c_1 Q_{i1} = kn-in-k^2$, and
  \item $c_2 Q_{i2} = (k-i)(k-i-1)(n-k-i)(n-k-i-1) - 2i^2(k-i)(n-k-i) + i^2(i-1)^2$,
 \end{enumerate}
 where $c_1 = k(n-k)/f_1$ and $c_2 = k(k-1)(n-k)(n-k-1)/f_2$.
\end{lemma}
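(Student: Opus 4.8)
The plan is to compute $Q_{i1}$ and $Q_{i2}$ directly from Lemma \ref{lem:rel_P_Q}, which gives $Q_{ij} = f_j P_{ji} / r_i$. Since $r_i = P_{0i} = \binom{k}{i}\binom{n-k}{i}$ is the valency of $R_i$ in the Johnson scheme, and $c_j$ has been defined precisely so that $c_1 = k(n-k)/f_1 = r_1/(f_1 \cdot 1)$ up to a harmless rescaling and $c_2 = k(k-1)(n-k)(n-k-1)/f_2$, the claimed identities reduce to showing
\begin{align*}
 \binom{k}{i}\binom{n-k}{i} P_{1i} = k(n-k)\,(kn - in - k^2)\Big/\big(k(n-k)\big)\cdot\binom{k}{i}\binom{n-k}{i}\Big/\!\Big(\text{something}\Big),
\end{align*}
so more cleanly: I would show that $P_{1i} = \frac{(kn-in-k^2)}{k(n-k)}\binom{k}{i}\binom{n-k}{i} \cdot \frac{f_1}{f_1}$, i.e. that $r_i Q_{i1}/f_1 = P_{1i}$ equals $(kn-in-k^2)/c_1 \cdot$ (valency factor), and similarly for $j=2$. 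Concretely, the identity to verify for part (a) is $P_{1i} \cdot k(n-k) = (kn - in - k^2)\binom{k}{i}\binom{n-k}{i}$ after clearing $f_1$ via Lemma \ref{lem:rel_P_Q}; for part (b) it is the analogous identity with $P_{2i}$ and the quartic polynomial in $i$.

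The first concrete step is to extract closed forms for $P_{1i}$ and $P_{2i}$ from the first formula in Lemma \ref{lem:evs}, namely $P_{ji} = \sum_{h=0}^i (-1)^h \binom{j}{h}\binom{k-j}{i-h}\binom{n-k-j}{i-h}$. For $j=1$ this is a two-term sum $\binom{k-1}{i}\binom{n-k-1}{i} - \binom{k-1}{i-1}\binom{n-k-1}{i-1}$, and for $j=2$ a three-term sum. Then I would rewrite each binomial $\binom{k-1}{i}$ etc.\ in terms of $\binom{k}{i}$ and $\binom{n-k}{i}$ using $\binom{k-1}{i} = \frac{k-i}{k}\binom{k}{i}$, $\binom{k-1}{i-1} = \frac{i}{k}\binom{k}{i}$, and the analogous relations with $n-k$ in place of $k$. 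Factoring out $\binom{k}{i}\binom{n-k}{i}/\big(k(n-k)\big)$ (resp.\ the degree-two analogue $\binom{k}{i}\binom{n-k}{i}/\big(k(k-1)(n-k)(n-k-1)\big)$) from each term, the bracketed remainder collapses to a polynomial in $i$, $k$, $n$; combining Lemma \ref{lem:rel_P_Q} with the definitions of $c_1, c_2$ then yields exactly $c_1 Q_{i1} = kn - in - k^2$ and the stated quartic for $c_2 Q_{i2}$.

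The only real obstacle is bookkeeping: for part (b), after substituting the three binomial identities, one gets a sum of several rational expressions in $i, k, n$ that must be placed over the common denominator $k(k-1)(n-k)(n-k-1)$ and simplified to $(k-i)(k-i-1)(n-k-i)(n-k-i-1) - 2i^2(k-i)(n-k-i) + i^2(i-1)^2$. I expect this simplification to be a somewhat lengthy but entirely mechanical polynomial expansion, and the cleanest presentation is simply to observe that the left-hand and right-hand sides are polynomials of bounded degree in $i$ that agree after the substitutions above — or, alternatively, to verify the identity by checking enough values of $i$ and invoking polynomial interpolation. No conceptual difficulty arises: everything follows from the explicit Johnson-scheme eigenvalues in Lemma \ref{lem:evs}, the relation $r_i Q_{ij} = f_j P_{ji}$ of Lemma \ref{lem:rel_P_Q}, and $r_i = \binom{k}{i}\binom{n-k}{i}$.
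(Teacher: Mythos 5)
Your proposal is correct and is exactly the paper's (one-line) proof spelled out in detail: the paper simply says to evaluate Lemma~\ref{lem:rel_P_Q} using Lemma~\ref{lem:evs}, which amounts to your computation $c_jQ_{ij}=c_j f_j P_{ji}/r_i$ with $r_i=\binom{k}{i}\binom{n-k}{i}$ and the rewriting $\binom{k-j}{i-h}=\frac{(\cdots)}{k(k-1)\cdots}\binom{k}{i}$. One small simplification you could note: after these substitutions the bracketed remainder for part (b) is \emph{already} the stated expression $(k-i)(k-i-1)(n-k-i)(n-k-i-1)-2i^2(k-i)(n-k-i)+i^2(i-1)^2$ term by term, so no further polynomial expansion or interpolation is needed.
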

\begin{proof}
  Evaluate Lemma \ref{lem:rel_P_Q} for the stated cases using Lemma \ref{lem:evs}.
\end{proof}

The eigenspaces of the Johnson scheme have various nice combinatorial descriptions.
Let $\scrG_S$ denote the family of all $k$-sets
that contain a fixed $s$-set $S$. Clearly, $|\scrG_S| = \binom{n-s}{k-s}$. Let $\lambda_s = \binom{n-s}{k-s}/\binom{n}{k}$.
Let $\psi_S$ denote the characteristic vector of $\scrG_S$.

\begin{lemma}[{\cite[Theorem 6.3.3]{Godsil2015}}]\label{lem:char_first_es}
  Fix $s \in \{ 1, \ldots, k \}$. The set $\cup_{r=1}^s\{ \psi_R - \lambda_r \vj: R \in \binom{[n]}{r} \}$ spans $V_1+V_2+\ldots+V_s$.
\end{lemma}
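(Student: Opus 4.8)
The plan is to first establish the standard structural fact that, for each $r$ with $0\le r\le\min(k,n-k)$, the set $\{\psi_R:R\in\binom{[n]}{r}\}$ spans $V_0\oplus V_1\oplus\cdots\oplus V_r$, and then to read off the lemma by projecting away the trivial eigenspace $V_0=\langle\vj\rangle$ and comparing dimensions. (The standing assumption $n\ge 2k$, under which the Johnson scheme has the eigenspace structure described above, guarantees $r\le s\le k\le\min(k,n-k)$, so this is applicable in the range we need.)

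To prove the structural fact I would argue in two steps. \emph{Step 1}: every $\psi_R$ with $|R|=r$ lies in $V_0\oplus\cdots\oplus V_r$. I would apply Delsarte's LP bound (Theorem~\ref{thm:lp_bound}) to the family $\scrG_R$. A direct count gives its inner distribution $a_i=\binom{k-r}{i}\binom{n-k}{i}$, since a $k$-set through $R$ meets a fixed $k$-set through $R$ in $k-i$ points in exactly $\binom{k-r}{i}\binom{n-k}{i}$ ways, independently of the fixed set. Using $r_iQ_{ij}=f_jP_{ji}$ (Lemma~\ref{lem:rel_P_Q}) together with the valency $r_i=\binom{k}{i}\binom{n-k}{i}$, the MacWilliams transform simplifies to $(aQ)_j=\frac{f_j}{\binom{k}{r}}\sum_i\binom{k-i}{r}P_{ji}$. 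Substituting the second expression for $P_{ji}$ from Lemma~\ref{lem:evs} and using the Vandermonde-type identity $\sum_i(-1)^i\binom{h}{i}\binom{k-i}{r}=\binom{k-h}{r-h}$ collapses this to a sum over $j\le h\le r$, which is empty for $j>r$; hence $(aQ)_j=0$ for $j>r$, and Theorem~\ref{thm:lp_bound} yields $\psi_R\in V_j^\perp$ for all $j>r$, i.e. $\psi_R\in V_0\oplus\cdots\oplus V_r$. \emph{Step 2}: the vectors $\{\psi_R:|R|=r\}$ are linearly independent. Their Gram matrix has $(R,R')$-entry $\binom{n-|R\cup R'|}{k-|R\cup R'|}$, which depends only on $|R\cap R'|$; hence it lies in the Bose--Mesner algebra of the Johnson scheme $J(n,r)$, its eigenvalues can be written down from Lemma~\ref{lem:evs} applied to $J(n,r)$, and they are all nonzero precisely because $r\le\min(k,n-k)$ --- this is Gottlieb's rank formula for inclusion matrices. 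Combining the two steps, $\{\psi_R:|R|=r\}$ spans a subspace of $V_0\oplus\cdots\oplus V_r$ of dimension $\binom{n}{r}=\dim(V_0\oplus\cdots\oplus V_r)$, so it spans all of it.

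Granting the structural fact, the lemma would follow quickly. Since $E_0=v^{-1}\vJ$ and $v=\binom{n}{k}$, the $V_0$-component of $\psi_R$ with $|R|=r$ is $v^{-1}\langle\vj,\psi_R\rangle\,\vj=v^{-1}|\scrG_R|\,\vj=\lambda_r\vj$; hence $\psi_R-\lambda_r\vj$ is exactly the projection of $\psi_R$ onto $V_1\oplus\cdots\oplus V_r\subseteq V_1\oplus\cdots\oplus V_s$, and so $W:=\mathrm{span}\,\bigcup_{r=1}^s\{\psi_R-\lambda_r\vj:R\in\binom{[n]}{r}\}\subseteq V_1\oplus\cdots\oplus V_s$. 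For the reverse inclusion, $\psi_R=(\psi_R-\lambda_s\vj)+\lambda_s\vj\in W+\langle\vj\rangle$ for every $R\in\binom{[n]}{s}$, and $\vj=\binom{k}{s}^{-1}\sum_{|R|=s}\psi_R$, so $W+\langle\vj\rangle$ contains $\mathrm{span}\{\psi_R:|R|=s\}=V_0\oplus\cdots\oplus V_s$; since $\langle\vj\rangle=V_0$ meets $V_1\oplus\cdots\oplus V_s$ trivially, this forces $\dim W=\dim(V_0\oplus\cdots\oplus V_s)-1=f_1+\cdots+f_s=\dim(V_1\oplus\cdots\oplus V_s)$, whence $W=V_1\oplus\cdots\oplus V_s$.

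The main obstacle is the structural fact, and inside it the only substantive point is the exact rank --- equivalently the linear independence in Step 2, i.e. that no eigenspace $V_j$ with $j\le r$ is annihilated by the inclusion map. This is precisely where $n$ may not be too small, and it is the part that genuinely uses $n\ge 2k$ (so that $r\le\min(k,n-k)$). Step 1 and the concluding dimension count are routine once the explicit eigenvalues of Lemma~\ref{lem:evs} are available, the binomial identities being of classical Vandermonde type.
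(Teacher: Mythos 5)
The paper does not actually prove this lemma --- it is imported verbatim from Godsil--Meagher \cite[Theorem 6.3.3]{Godsil2015} --- so there is no in-paper argument to compare against; what you have written is essentially the standard proof from that reference, and it is correct. The computations in Step~1 check out: the inner distribution of $\scrG_R$ is indeed $a_i=\binom{k-r}{i}\binom{n-k}{i}$; the reduction to $(aQ)_j=\frac{f_j}{\binom{k}{r}}\sum_i\binom{k-i}{r}P_{ji}$ follows from $r_i=\binom{k}{i}\binom{n-k}{i}$ together with $\binom{k}{i}\binom{k-i}{r}=\binom{k}{r}\binom{k-r}{i}$; and the identity $\sum_i(-1)^i\binom{h}{i}\binom{k-i}{r}=\binom{k-h}{r-h}$ (read off from $\sum_i(-1)^i\binom{h}{i}(1+x)^{k-i}=x^h(1+x)^{k-h}$) does restrict the outer sum to $j\le h\le r$, so $(aQ)_j=0$ for $j>r$ and Theorem~\ref{thm:lp_bound} gives $\psi_R\perp V_j$. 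The final projection-plus-dimension argument is also sound. Two caveats, neither fatal. First, the real content of your proof sits in two classical facts that you cite rather than derive: Gottlieb's rank formula (linear independence of $\{\psi_R:|R|=r\}$ when $r\le\min(k,n-k)$) and the equality $\sum_{j=0}^{r}f_j=\binom{n}{r}$ for the cometric ordering of the Johnson scheme, which your dimension count at the end of Step~2 silently uses; the latter is nowhere in the paper's preliminaries, so a fully self-contained write-up should record $f_j=\binom{n}{j}-\binom{n}{j-1}$ (or replace the count by the explicit nonzero eigenvalues $\binom{n-r-j}{k-j}\binom{k-j}{r-j}$ of the Gram matrix, which give independence and the containment simultaneously). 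Second, the lemma as printed carries no hypothesis on $n$, while your argument genuinely needs $n\ge 2k$ (so that $r\le n-k$); this is the implicit regime throughout the paper, but it is worth flagging since it is exactly where the rank statement would otherwise fail.
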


We say that a family $\scrF$ is {\it $s$-subset-regular} if every $s$-set lies in the same number of elements $\delta_s$ of $\scrF$.
For any $r$ with $1 \leq r \leq s$, double counting pairs $( R, S ) \in \binom{[n]}{r} \times \binom{[n]}{s}$ with $R \subset S$
shows that every $r$-set lies in exactly $\delta_s \binom{n-r}{s-r}/\binom{k-r}{s-r}$ elements of $\scrF$.
The following is well-known, but we include a short proof for completeness.

\begin{lemma}\label{lem:char_F}
  Fix $s \geq 1$.
  Let $\scrF$ be a family of $k$-uniform sets and let $\chi$ be the characteristic vector of $\scrF$.
  Then (a) implies (b) and (c):
  \begin{enumerate}[(a)]
   \item $\ff$ is $s$-subset-regular.
   \item The vector $\chi$ is orthogonal to the eigenspaces $V_1, \ldots, V_s$.
   \item The inner distribution $a$ of $\scrF$ satisfies
    \begin{align*}
      0 = \sum_{i=0}^k Q_{i1} a_i.
    \end{align*}
  \end{enumerate}
\end{lemma}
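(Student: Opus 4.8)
The plan is to derive (b) from (a) using the explicit spanning set for $V_1+\dots+V_s$ provided by Lemma~\ref{lem:char_first_es}, and then to obtain (c) as the special case $j=1$ of Delsarte's LP bound (Theorem~\ref{thm:lp_bound}) applied to the conclusion of (b). So the whole argument reduces to two appeals to already-established facts, glued together by one elementary counting identity.

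First I would record the remark preceding the lemma: if $\ff$ is $s$-subset-regular, then for every $r$ with $1\le r\le s$, double-counting incident pairs $(R,F)$ with $R\in\binom{[n]}{r}$, $F\in\ff$, and $R\subset F$ shows that each $r$-set lies in exactly $\delta_r=|\ff|\binom{k}{r}/\binom{n}{r}$ members of $\ff$; equivalently $\psi_R^T\chi=\delta_r$ for every $r$-set $R$. Next I would check the identity $\lambda_r|\ff|=\delta_r$, which amounts to $\binom{n-r}{k-r}/\binom{n}{k}=\binom{k}{r}/\binom{n}{r}$ and follows by cancelling factorials. Consequently $(\psi_R-\lambda_r\vj)^T\chi=\delta_r-\lambda_r|\ff|=0$ for every $R\in\binom{[n]}{r}$ with $1\le r\le s$. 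By Lemma~\ref{lem:char_first_es} the vectors $\psi_R-\lambda_r\vj$ with $1\le r\le s$ span $V_1+\dots+V_s$, and $\chi$ is orthogonal to each of them, so $\chi\perp V_1+\dots+V_s$; since each $V_j$ with $j\le s$ is contained in this sum, in particular $\chi\perp V_j$ for every $j\in\{1,\dots,s\}$, which is (b).

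For (c) I would use that $E_1$ is the orthogonal projection onto $V_1$: the case $j=1$ of (b) (legitimate since $s\ge 1$) gives $E_1\chi=0$, hence $\chi^T E_1\chi=0$, and Theorem~\ref{thm:lp_bound} then yields $0=(aQ)_1=\sum_{i=0}^k Q_{i1}a_i$, which is (c). I do not expect a genuine obstacle here; the only point that needs care is the bookkeeping identity $\lambda_r|\ff|=\delta_r$, which is exactly what makes each $\psi_R-\lambda_r\vj$ truly orthogonal to $\chi$ rather than merely approximately so, with everything else being a direct invocation of the two cited lemmas.
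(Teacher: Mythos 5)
Your proposal is correct and follows essentially the same route as the paper: derive (b) by showing $\chi$ is orthogonal to each spanning vector $\psi_R-\lambda_r\vj$ of $V_1+\ldots+V_s$ via the degree computation $\psi_R^T\chi=\delta_r=\lambda_r|\ff|$, then read off (c) from Theorem~\ref{thm:lp_bound} with $j=1$. The only cosmetic difference is that you pair the spanning vectors directly with $\chi$ rather than with its projection $\chi'$ onto $V_1+\ldots+V_k$ as the paper does; these are equivalent since $\vj\perp\psi_R-\lambda_r\vj$.
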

\begin{proof}
  The eigenspaces
  $V_0$, $V_1$, \ldots, $V_k$ are pairwise orthogonal,
  so we can write $\chi = E_0 \chi + (E_1+E_2+\ldots+E_k) \chi = \frac{|\scrF|}{v} \vj + \chi'$,
  where $\chi' \in V_1 + V_2 + \ldots + V_k$.
  Hence, for any $R \in \binom{[n]}{r}$, where $1 \leq r \leq s$,
  \begin{align*}
    \chi^T \psi_R &= \left(\frac{|\scrF|}{v} \vj \right)^T \left(\lambda_r \vj\right)
		      + \chi'^T \left(\psi_R - \lambda_r \vj \right)\\
		  &= |\scrF| \lambda_r + \chi'^T \left(\psi_R - \lambda_r \vj\right).
  \end{align*}
  Since, by simple double counting, the degree of $R$ in $\ff$ is $\lambda_r |\ff|$, we have $\chi'^T \left(\psi_R - \lambda_r \vj \right) = 0$.
  Hence, $\chi$ is orthogonal to all vectors in $V_1 + V_2 + \ldots + V_s$. Hence, (a) implies (b).

  By Theorem \ref{thm:lp_bound}, $\chi \in V_1^\perp$ is equivalent to (c).
\end{proof}

\section{Upper Bounds}\label{sec3}

\subsection{Proof of Theorems~\ref{thmmain1} and \ref{thmmain2}}

Clearly, Theorem~\ref{thmmain1} is implied by Theorem~\ref{thmmain2}, so we only need to prove Theorem~\ref{thmmain2}.  In terms of Theorem~\ref{thmdf}, we put $C=\max\{2\alpha j(r), 2c(r)\}$.  Fix $n,k$, such that $n\ge Ck$ and an $\alpha$-irregular family $\ff\subset {[n]\choose k}$. Then Theorem~\ref{thmdf} states that there exists an intersecting $j$-junta $\mathcal J=\{A\in{[n]\choose k}: A\cap J\in \mathcal J^*\}$ for some $J$ with $|J|=j\le j(r)$ and $\mathcal J^*\subset 2^J$, such that $|\ff\setminus \mathcal J|\le c(r){n-r\choose k-r}$.

We may assume that $|\ff|> C{n-r\choose k-r}\ge 2 c(r){n-r\choose k-r}$, otherwise we have nothing to prove. Therefore, we have $|\ff\cap \mathcal J|/|\ff|> 1/2$.

Given the approximation by the junta $\mathcal J$, let us bound the degrees of elements in $\ff$. On the one hand, clearly, any set from $\ff\cap \mathcal J$ intersects $J$, therefore, the maximal degree of an element in $J$ is at least $|\ff\cap \mathcal J|/j$. On the other hand, the average degree of an element in $[n]$ is $\frac kn|\ff|\le|\ff|/C.$ Hence, the ratio between the maximal and the minimal degree in $\ff$ is at least $$\frac{|\ff\cap \mathcal J|/j}{|\ff|/C}>\frac {C}{2j}\ge \alpha. $$
We conclude that, if $|\ff|>C{n-r\choose k-r}$, then $\ff$ is not $\alpha$-irregular. Theorem~\ref{thmmain2} is proved. \\

\subsection{Proof of Theorem \ref{thm:bnd_n_small_intro}}

Theorem \ref{thm:bnd_n_small_intro} is implied by the following
result for $s$-subset-regular $k$-uniform intersecting families. It is a variation of Hoffman's bound.

\begin{theorem}\label{thm:bnd_n_small}
  Fix odd $s\ge 1$. An  $s$-subset-regular $k$-uniform intersecting family $\scrF$ on $[n]$ satisfies
  \begin{align*}
    |\scrF| \leq \frac{v}{1 - P_{0k}/P_{(s+2)k}} = \frac{\binom{n}{k}}{1 + \frac{\binom{n-k}{k}}{\binom{n-k-s-2}{k-s-2}}}.
  \end{align*}
  For $n > 2k$ we have equality only if the characteristic vector $\chi$ of $\scrF$
  lies in the span of $\vj$ and the eigenspace $V_{s+2}$.
\end{theorem}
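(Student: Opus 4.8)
The plan is to adapt the Hoffman (ratio) bound to the Kneser graph $A_k$ of the Johnson scheme, exploiting $s$-subset-regularity to remove the eigenspaces $V_1,\dots,V_s$ from the computation. Throughout I would assume $|\scrF|>0$ and $1\le s\le k-2$ (so that the right-hand side is a positive number; the remaining cases are vacuous). Since $\scrF$ is intersecting, no two of its sets are disjoint, and two $k$-sets are disjoint exactly when they lie in relation $R_k$, so $\chi^TA_k\chi=0$. Decompose $\chi=\sum_{j=0}^{k}E_j\chi$. By Lemma~\ref{lem:char_F}, $s$-subset-regularity forces $E_j\chi=0$ for $1\le j\le s$, while $E_0\chi=\frac{|\scrF|}{v}\vj$, so $\chi^TE_0\chi=|\scrF|^2/v$ and $\sum_j\chi^TE_j\chi=\|\chi\|^2=|\scrF|$. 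Using $A_k=\sum_j P_{jk}E_j$ together with $\chi^TE_j\chi=\|E_j\chi\|^2\ge 0$, I get
\begin{align*}
  0=\chi^TA_k\chi=P_{0k}\,\frac{|\scrF|^2}{v}+\sum_{j=s+1}^{k}P_{jk}\,\chi^TE_j\chi .
\end{align*}

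The crux is to identify the smallest eigenvalue of $A_k$ on $V_{s+1}\oplus\cdots\oplus V_k$. By Lemma~\ref{lem:evs}, $P_{jk}=(-1)^j\binom{n-k-j}{k-j}$; the magnitudes $\binom{n-k-j}{k-j}$ are nonincreasing in $j$, since the ratio of consecutive terms is $(k-j)/(n-k-j)\le 1$, strictly less than $1$ when $n>2k$. As $s$ is odd, the least odd index in $\{s+1,\dots,k\}$ is $s+2$, whence $P_{(s+2)k}=-\binom{n-k-s-2}{k-s-2}$ is the minimum of $\{P_{jk}:s+1\le j\le k\}$, and it is strictly below $P_{jk}$ for every other $j$ in this range when $n>2k$. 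Consequently
\begin{align*}
  \sum_{j=s+1}^{k}P_{jk}\,\chi^TE_j\chi\ \ge\ P_{(s+2)k}\sum_{j=s+1}^{k}\chi^TE_j\chi\ =\ P_{(s+2)k}\Bigl(|\scrF|-\frac{|\scrF|^2}{v}\Bigr).
\end{align*}

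Substituting this into the previous display, dividing by $|\scrF|>0$, and collecting the coefficient of $|\scrF|/v$ gives $\frac{|\scrF|}{v}(P_{0k}-P_{(s+2)k})\le -P_{(s+2)k}$; since $P_{0k}>0>P_{(s+2)k}$, dividing yields $|\scrF|\le v/(1-P_{0k}/P_{(s+2)k})$, and inserting $P_{0k}=\binom{n-k}{k}$ and $P_{(s+2)k}=-\binom{n-k-s-2}{k-s-2}$ (here $s+2$ is odd) produces the closed form in the statement. For $n>2k$, equality requires the single inequality above to be tight, i.e.\ $\chi^TE_j\chi=\|E_j\chi\|^2=0$ for every $j\in\{s+1,\dots,k\}$ with $P_{jk}>P_{(s+2)k}$, which by the strictness just noted is every such $j$ apart from $j=s+2$; combined with $E_j\chi=0$ for $1\le j\le s$ this leaves $\chi\in\langle\vj\rangle+V_{s+2}$, as claimed. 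The one step that is not routine ratio-bound bookkeeping is the eigenvalue comparison in the second paragraph — verifying that discarding $V_1,\dots,V_s$ genuinely lifts the relevant minimum eigenvalue from $P_{1k}$ to $P_{(s+2)k}$ — together with keeping track of where the hypothesis $n>2k$ is needed for the strictness used in the equality analysis.
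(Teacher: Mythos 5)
Your proposal is correct and follows essentially the same route as the paper: both exploit $\chi^T A_k\chi=0$, kill the eigenspaces $V_1,\dots,V_s$ via Lemma~\ref{lem:char_F}, identify $P_{(s+2)k}=-\binom{n-k-s-2}{k-s-2}$ as the relevant minimum eigenvalue (the paper packages this as the auxiliary operator $A_k-\sum_{i=1}^s P_{ik}E_i$, which is only a notational device), and run the standard ratio-bound rearrangement. Your explicit justification of why the minimum over $\{s+1,\dots,k\}$ sits at the odd index $s+2$, and of where $n>2k$ gives the strictness needed for the equality statement, is if anything slightly more detailed than the paper's.
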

\begin{proof}[Proof of Theorem \ref{thm:bnd_n_small}]
  Let $\chi$ be the characteristic vector of $\scrF$.
  First notice that
  \begin{align}
    |\scrF| = \chi^T \chi = \frac{|\scrF|^2}{v} + \sum_{j=1}^k \chi^T E_j \chi.\label{eq:Y_Ysq}
  \end{align}
  Let $A = A_k - \sum_{i=1}^s P_{ik} E_i$.
  Obviously, $A$ has the eigenvalues $P_{jk}$ for $j = 0$ and $j > s$, and the eigenvalue $0$ for the eigenspace $V_i$, where $1 \leq i \leq s$.
  From the definition of $\scrF$, $\chi^T A_k \chi = 0$.
  By Lemma \ref{lem:char_F}, $\chi^T E_i \chi = 0$ for $1 \leq 1 \leq s$.
  Hence, $\chi^T A \chi = 0$.
  Notice that, using Lemma \ref{lem:evs} and Equation \eqref{eq:decomp_adj}, $P_{(s+2)k}$ is the smallest eigenvalue of $A$, so we obtain
  \begin{align*}
    0 = \chi^T A \chi
    &= \frac{P_{0k}}{v} \chi^T \vJ \chi + \sum_{j=s+1}^k P_{jk} \chi^T E_j \chi\\
    &\geq \frac{P_{0k}}{v} \chi^T \vJ \chi + P_{(s+2)k} \sum_{j=1}^k \chi^T E_j \chi\\
    &\stackrel{\eqref{eq:Y_Ysq}}{=} \frac{P_{0k}}{v} \cdot |\scrF|^2 + P_{(s+2)k} \left( |\scrF| - \frac{|\scrF|^2}{v} \right).
  \end{align*}
  Rearranging yields
  \begin{align*}
    |\scrF| \leq \frac{v}{1 - P_{0k}/P_{(s+2)k}},
  \end{align*}
  which, together with Lemma \ref{lem:evs}, shows the first half of the assertion.
  For the second half of the assertion notice that we have equality in this bound
  only if $\chi^T E_{s+2} \chi = \sum_{j=1}^k \chi^T E_j \chi$.
\end{proof}

Evaluating Lemma \ref{lem:char_single_es} for the case of equality in Theorem \ref{thm:bnd_n_small} and $s=1$, we
obtain that a $k$-set $F \notin \scrF$ meets exactly
$$\frac{3k(k-1)(k-2)}{n^2-3kn-n+3k^2}$$
elements of $\scrF$ in $k-1$ elements. As this has to be an integer, Theorem \ref{thm:bnd_n_small}
cannot be tight for many combinations of $n$ and $k$. In particular, the bound is never
tight for $3k(k-1)(k-2) < n^2-3kn-n+3k^2$.
For odd $s \geq 1$ fixed, the same argument yields that the bound is not tight for $n$ at least $\sim \sqrt{s+2} k^{\frac{s+2}{s+1}}$.

Ray-Chaudhuri and Wilson showed in \cite[Theorem 1]{RCW1975} that a $2t$-design contains at least $\binom{n}{t}$ blocks.
Together with Theorem \ref{thm:bnd_n_small} this implies that $s$-subset-regular intersecting families do not exist
when $s$ is large compared to $k$ (vaguely $k < \frac{3}{2} (s+1)$).

\begin{remark}
  The bound of
  \begin{align*}
    \frac{v}{1 - P_{0k}/P_{(s+2)k}}
  \end{align*}
  and its consequences hold for various other association schemes, where the eigenspaces can be
  characterized in a similar way. One example for such an association scheme would be the
  $q$-analog of the Johnson scheme, the $q$-Johnson (Grassmann) scheme.
\end{remark}

\subsection{Proof of Theorem \ref{thm:char_n_max}}

For small $n$ Delsarte's linear programming (LP) bound
corresponds to Theorem \ref{thm:bnd_n_small}, but for $n$ close to $k^2-k+1$ Delsarte's LP bound is much better.
In particular, Delsarte's LP bound implies Theorem \ref{thm:char_n_max}.
In the following we prove this formally.

\begin{proof}[Proof of Theorem \ref{thm:char_n_max}]
  The inner distribution $a$ of an intersecting family $\scrF$ has the form
  $a = (1, a_1, \ldots, a_{k-1}, 0)$.
  In terms of Lemma~\ref{lem:dual_evs}, denote $\alpha_i:=c_2 Q_{i2}$
  and $\beta_i:=c_1Q_{i1}$.
  By Lemma \ref{lem:char_F}, we know that $\sum_{i=0}^{k-1} \beta_i a_i = 0$.
  By Theorem \ref{thm:lp_bound}, we know that $\sum_{i=0}^{k-1} \alpha_i a_i \geq 0$.
  Hence, $\sum_{i=0}^{k-1} (\alpha_i - (k-1)(n-k-1)\beta_i)a_i \geq 0$.
  Using Lemma~\ref{lem:dual_evs}, we have that
  \begin{align*}
    \gamma_i := \alpha_i - (k-1)(n-k-1)\beta_i = -i(n-2)(kn-in-k^2+i).
  \end{align*}
  If $n > k^2-k+1$, then $\gamma_i < 0$ for all $i \in \{ 1, \ldots, k-1\}$.
  Hence, $\sum_{i=0}^{k-1} \gamma_i a_i \geq 0$ implies $a = (1, 0, \ldots, 0)$, which is a contradiction.
  Thus, we can assume $n = k^2-k+1$. Then $\gamma_i = 0$ for $i \in \{ 0, k-1 \}$
  and $\gamma_i < 0$ for $i \in \{ 1, \ldots, k-2 \}$.
  Hence, $\sum_{i=0}^{k-1} \gamma_i a_i \geq 0$ implies $a = (1, 0, \ldots, 0, a_{k-1}, 0)$.
  Now $\sum_{i=0}^{k-1} \beta_i a_i = 0$ together with Lemma~\ref{lem:char_F} implies $(k-1)k = a_{k-1}$.
  Therefore, $|\scrF| = k^2-k+1$, each element in $[k^2-k+1]$ has degree $k$, and each set in $\scrF$ meets all other sets in $\scrF$ in exactly one point.
  This is a well-known definition of a projective plane of order $k-1$.
\end{proof}

A similar method also gives us the following lower bound on the size of a
regular $k$-uniform intersecting family $\ff$.

\begin{lemma}\label{lem:lower_bnd}
  If a regular $k$-uniform intersecting $\scrF$ exists, then
  \begin{align*}
   |\scrF| \geq 1 + \frac{k(n-k)}{k^2-n}
  \end{align*}
  with equality only if $\sum_{i=1}^{k-2} a_i = 0$.
\end{lemma}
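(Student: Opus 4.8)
The plan is to run the same kind of argument as in the proof of Theorem~\ref{thm:char_n_max}, but to read off a lower bound on $|\scrF|$ instead of a contradiction. In fact only the equality constraint coming from $1$-subset-regularity (Lemma~\ref{lem:char_F}) together with the nonnegativity of the inner distribution is needed; Delsarte's LP bound plays no role here. Concretely, one is solving the linear program of minimizing $\sum_i a_i$ subject to a single linear equality and $a_i\ge 0$, whose optimum is attained by concentrating all mass on one coordinate.

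First I would record two elementary facts about the inner distribution $a=(1,a_1,\dots,a_{k-1},0)$ of a regular $k$-uniform intersecting family $\scrF$. Each $a_i=\tfrac{1}{|\scrF|}\chi^T A_i\chi$ is nonnegative, being $|\scrF|^{-1}$ times the number of ordered pairs of members of $\scrF$ in relation $R_i$. Since $\sum_{i=0}^k A_i=\vJ$, summing gives $\sum_{i=0}^k a_i=\tfrac{1}{|\scrF|}\chi^T\vJ\chi=|\scrF|$, so using $a_0=1$ and $a_k=0$ (the family is intersecting) we get $\sum_{i=1}^{k-1}a_i=|\scrF|-1$. A regular family is $1$-subset-regular, so by Lemma~\ref{lem:char_F}(c) and Lemma~\ref{lem:dual_evs}(a), $\sum_{i=0}^{k-1}\beta_i a_i=0$ with $\beta_i:=kn-in-k^2$ (after multiplying the relation of Lemma~\ref{lem:char_F}(c) by the positive constant $c_1$).

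Next, since $\beta_0=k(n-k)$, the regularity relation rearranges to
$$k(n-k)=\sum_{i=1}^{k-1}(-\beta_i)a_i=\sum_{i=1}^{k-1}\bigl(k^2-(k-i)n\bigr)a_i.$$
The coefficient $k^2-(k-i)n$ is increasing in $i$ on $\{1,\dots,k-1\}$, hence at most its value $k^2-n$ attained at $i=k-1$; since each $a_i\ge 0$ this yields
$$k(n-k)\le (k^2-n)\sum_{i=1}^{k-1}a_i=(k^2-n)\bigl(|\scrF|-1\bigr).$$
By Theorem~\ref{thm:char_n_max}, the existence of $\scrF$ forces $n\le k^2-k+1$, so $k^2-n\ge k-1\ge 1$ (we may assume $k\ge 2$, the case $k=1$ being trivial); dividing gives $|\scrF|\ge 1+k(n-k)/(k^2-n)$. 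Equality requires $\bigl(k^2-(k-i)n\bigr)a_i=(k^2-n)a_i$ for every $i\in\{1,\dots,k-1\}$, i.e.\ $a_i=0$ whenever $i\le k-2$, which is exactly the stated condition $\sum_{i=1}^{k-2}a_i=0$.

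There is essentially no real obstacle here beyond bookkeeping. The two points deserving a word of care are: that $k^2-n>0$, so that the final division and the direction of the inequality are legitimate, which is guaranteed by Theorem~\ref{thm:char_n_max}; and that the coefficients $-\beta_i$ need not themselves be nonnegative when $n$ is small, which is harmless because we use only the one-sided estimate $-\beta_i\le k^2-n$ paired with $a_i\ge 0$. All the algebraic content is already contained in the evaluation of $Q_{i1}$ in Lemma~\ref{lem:dual_evs}.
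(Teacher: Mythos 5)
Your proof is correct and follows essentially the same route as the paper: the paper derives the single linear constraint $kn-k^2+\sum_{i=1}^{k-1}(kn-in-k^2)a_i=0$ and then simply asserts that, subject to it and $a_i\ge 0$, the sum $\sum_i a_i$ is minimized by concentrating all mass on $a_{k-1}$. You have merely written out the "easy to see" LP step explicitly (including the needed observation that $k^2-n>0$ via Theorem~\ref{thm:char_n_max}), which is a faithful filling-in rather than a different argument.
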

\begin{proof}
  Let $a$ be the inner distribution of $\scrF$.
  By Lemma \ref{lem:char_first_es}, $kn-k^2 + \sum_{i=1}^{k-1} (kn-in-k^2)a_i =0$.
  It is easy to see that under this constraint $|\scrF| = \sum_{i=0}^{k-1} a_i$
  is minimized if $a = (1, 0, \ldots, 0, a_{k-1}, 0)$.
\end{proof}

\section{Lower Bounds}\label{sec4}

We have already shown that there are no regular families for $n>k^2-k+1$ and that for $n=k^2-k+1$  a projective plane of order $k-1$ is the only possible example. We note  that the existence of projective planes is only known for $k-1$ being a prime power. In the first part of this section we give some general constructions of how to build bigger regular intersecting families out of smaller ones, and derive some general lower bounds. In the second part of this section we discuss the case $n=2k$ and $n=2k+1$, as well as some results for small values of $k$.

We note that Ellis, Kalai, and Lifshitz in \cite[Section 4]{EKL} construct symmetric (and thus regular) $k$-uniform intersecting families on $[n]$ for all $k$ and $n$ satisfying $k\ge 1.1527\sqrt{n}$. They rely on the relation between intersecting families and difference covers for $\mathbb Z_n$. We could not improve their results for the case of regular intersecting families, so a very interesting question that remains is whether regular intersecting families exist for all sufficiently large $n,k$ satisfying $k\ge (1+o(1))\sqrt{n}$.

\subsection{General Constructions and Proposition~\ref{prop3}}
For any regular family $\ff\subset {[n]\choose k}$ we put $\alpha(\ff):=k/n$. We call this parameter the {\it ratio} of $\ff$.
Due to the simple equality $|\ff|k = n \delta$, $\alpha(\ff) = \delta/|\ff|$ and $\alpha(\ff)$ tells us, what proportion of all sets from $\ff$ contains a given element.

Let us start with giving some general ways of constructing a regular intersecting family.
Assume that we have a regular intersecting family $\ff\subset{X\choose k}$ on the set $X$, a regular intersecting family $\ff_2\subset {Y\choose k_2}$ on the set $Y$, and a regular (but not necessarily intersecting) family $\G\subset {Z\choose m}$ on the set $Z$. Assume additionally that the sets $X,Y,Z$ are pairwise disjoint and that the ratios of $\ff$ and $\G$ are the same: $\alpha(\ff) = \alpha(\G)$.

Fix a parameter $l$, $1\le l< \min\{|F_1\setminus F_2|:F_1,F_2\in \ff, F_1\ne F_2\}$. Then the following families are intersecting and regular:
\begin{align}
  &\ff^l:=\Big\{F'\in {X\choose k+l}: F'\supset F\text{ for some }F\in \ff\Big\}, && \ff^l\subset{X\choose k+l};\\
\label{eq11} &\ff+\G :=\big\{F\cup G: F\in \ff, G\in \G\big\}, &&\ff+\G\subset {X\cup Z\choose k+m};\\
&\ff\times \ff_2:=\big\{F\times F_2:F\in \ff, F_2\in \ff_2\big\}, &&\ff\times\ff_2\subset {X\times Y\choose kk_2}.
\end{align}
It is easy to see that each of the families above is intersecting. Let us show that all these families are regular. It is obvious in the case of $\ff\times \ff_2$.

This is the most difficult to show in the case of family $\ff^l$. First, note that, due to the choice of $l$, for any $F'\in \ff^l$ there is a {\it unique} set $F\in \ff$, such that $F'\supset F$. Thus, we may partition $\ff^l$ into families $\ff^l(F):=\{F'\in {X\choose k+l}: F'\supset F\}, $ where $F\in \ff$. The elements in $\ff^l(F)$ have two possible degrees. Each element $x\in F$ has degree $|\ff^l(F)| = {|X|-k\choose l}$, while any element $x\in X\setminus F$ has degree ${|X|-k-1\choose l-1}$. It is clear that, since $\ff$ is regular, each $x\in X$ has degree of the first type in $\alpha(\ff)|\ff|$ families $\ff^l(F)$ and the degree of the second type in all the others. Since the degree of each element in $\ff^l$ is the sum of its degrees in $\ff^l(F)$, we conclude that $\ff'$ is regular. We note that $|\ff^l| = {|X|-k\choose l}|\ff|$.

The family $\ff+\G$ is regular, since each element from $X$ is contained in $\alpha(\ff)$-fraction of all sets in $\ff+\G$, and each element from $Y$ is contained in $\alpha(\G)$-fraction of all sets in $\ff+\G$. However, $\alpha(\ff) = \alpha(\G)$ by assumption. We note that $|\ff+\G| = |\ff||\G|$ and that $\alpha(\ff+\G) = \alpha(\ff)$.\\

We use Construction \eqref{eq11} to show the lower bound from Proposition~\ref{prop3}. Fix $c>0$ from the proposition. Recall that we need to construct a regular intersecting family with ratio at most $1/c$ and size $\Theta({n-1\choose k-1})$. In particular, $\binom{n}{k}$ and $\binom{n-1}{k-1}$ are of the same order of magnitude. Find a prime power $q$, such that $(q^2+q+1)/(q+1)>c$. Put $\ff$ to be the projective plane of order $q$ on the set $X$. Note that $\ff$ is regular, intersecting, and $\alpha(\ff) = (q+1)/(q^2+q+1)$. Take $\G={Z\choose l(q+1)}$, where $Z$ is a set of size $l(q^2+q+1)$ disjoint with $X$. Here we think of $q$ that is fixed and $l$ that tends to infinity. Then $\alpha(\G)=\alpha(\ff)$. Thus, the family $\ff+\G$ is regular, intersecting, and satisfies $\alpha(\ff+\G)<1/c$. Moreover, $\ff+\G\subset  {|X\cup Z|\choose (l+1)(q+1)}$ and
$$\frac{|\ff+\G|}{{|X\cup Z|\choose (l+1)(q+1)}} = \frac{(q^2+q+1){l(q^2+q+1)\choose l(q+1)}}{{(l+1)(q^2+q+1)\choose (l+1)(q+1)}}>\frac{(q^2+q+1){l(q^2+q+1)\choose l(q+1)}}{{l(q^2+q+1)\choose l(q+1)}\frac{\big((l+1)(q^2+q+1)\big)^{q^2+q+1}}{\big(lq^2\big)^{q^2} \big(l(q+1)\big)^{q+1}}}=\Theta_q(1).$$

\subsection{Case $n=2k$}

In the case $n=2k$ no intersecting family of $k$-sets can have size bigger than  $\binom{2k-1}{k}$.
Brace and Daykin \cite{Brace1972} (in somewhat different terms) showed that for $k$ not a power of $2$, this bound is tight.

\begin{theorem}[{\cite[Example 1]{Brace1972}}]\label{thm25}
  If $k$ is not a power of $2$, then a largest $k$-uniform regular intersecting
  family on $[2k]$ has size $\binom{2k-1}{k}$.
\end{theorem}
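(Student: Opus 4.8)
When $n=2k$, each $k$-set $F\subset[2k]$ is disjoint from its complement $F^c:=[2k]\setminus F$, which is again a $k$-set, so an intersecting family contains at most one member of each of the $\frac12\binom{2k}{k}=\binom{2k-1}{k}$ complementary pairs $\{F,F^c\}$; this gives $|\ff|\le\binom{2k-1}{k}$ for every intersecting $\ff\subset\binom{[2k]}{k}$, regular or not, as already noted above. Equality forces $\ff$ to pick exactly one member of each complementary pair, i.e.\ to be a \emph{transversal} of the partition of $\binom{[2k]}{k}$ into such pairs. Conversely, any transversal is automatically intersecting: if $F,G$ were disjoint members, then $G=F^c$, contradicting that only one of $\{F,F^c\}$ was chosen. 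So the theorem reduces to the purely combinatorial task of constructing, for every $k$ that is not a power of $2$, a \emph{regular} transversal of the complementary pairs.

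\textbf{Necessity of the hypothesis.} A regular transversal $\ff$ has $|\ff|=\binom{2k-1}{k}$, so summing degrees over $[2k]$ shows its common degree is $\delta=\frac{k}{2k}|\ff|=\frac14\binom{2k}{k}$, which must be an integer. By Kummer's theorem the exponent of $2$ in $\binom{2k}{k}$ equals the number of $1$'s in the binary expansion of $k$, hence $4\mid\binom{2k}{k}$ exactly when $k$ is not a power of $2$. This both explains the hypothesis and shows it cannot be dropped; the content of the theorem is that it is also sufficient. (I would include this remark even though, strictly, only the construction direction is needed, since it pins down why the hypothesis is the right one.)

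\textbf{Constructing the transversal.} This is the heart of the matter and the step I expect to be the main obstacle. Equivalently one wants a sign assignment $g\colon\binom{[2k]}{k}\to\{\pm1\}$ with $g(F^c)=-g(F)$, defining $\ff=g^{-1}(1)$, and $\sum_{F\ni x}g(F)=0$ for every $x\in[2k]$, which is regularity. I would search for such a $g$ among rules that are ``linear'' in a combinatorial sense: fix a partition $[2k]=B_1\cup\dots\cup B_t$ whose block sizes reflect either the binary expansion of $2k$ or the factorization $k=2^a q$ with $q$ odd (so $t\ge 2$, or respectively $q\ge 3$), and decide membership of $F$ by comparing the profile $(|F\cap B_1|,\dots,|F\cap B_t|)$ against the central profile $(\frac12|B_1|,\dots,\frac12|B_t|)$ by a carefully chosen preorder, using an auxiliary rule --- recursing inside the blocks, or a regular tournament on the blocks when $t$ is odd --- to orient the complementary pairs sharing the central profile. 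Alternatively, pick a zero-sum weight vector $w\in\setR^{2k}$ admitting a symmetry that conjugates $w$ to $-w$, and set $g(F)=\operatorname{sign}\left(\sum_{i\in F}w_i\right)$ with a consistent tie-break. In either case the counts $\sum_{F\ni x}g(F)$ automatically agree for $x$ ranging over one block (or one orbit), so the real work --- and the only place the hypothesis ``$k$ not a power of $2$'' enters --- is arranging the profile rule and the tie-break so that these counts vanish \emph{across} blocks. Getting all $2k$ degrees exactly equal is delicate; already $n=2k=6$ rules out the crudest threshold rules, so the construction must balance the central pairs against the non-central ones rather than ignore them. This verification is exactly the content of Brace and Daykin's Example~1, which we invoke here.
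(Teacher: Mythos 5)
Your upper bound via complementary pairs and your divisibility analysis (the common degree must be $\tfrac14\binom{2k}{k}$, which by Kummer is an integer exactly when $k$ is not a power of $2$) are both correct and match what the paper says around this theorem. But the proof has a genuine gap at exactly the place you flag as ``the heart of the matter'': you never construct a regular family of size $\binom{2k-1}{k}$. You list several candidate strategies (profile rules on a partition of $[2k]$, signed weight vectors with tie-breaks) without carrying any of them out, and then close by saying the verification ``is exactly the content of Brace and Daykin's Example~1, which we invoke here.'' Since the construction \emph{is} the theorem, invoking the cited source at that point is circular as a proof; nothing in your text establishes existence.

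The missing step can be done, and the paper indicates how (it proves the power-of-$2$ case in Theorem~\ref{thm:neq2k_k_power_of_2} and notes that the present theorem is recovered from that proof). The trick is not to balance the transversal directly, where every swap must respect the complementation pairing, but to fix the element $2k$ and build a family $\mathcal Q\subset\binom{[2k-1]}{k-1}$ of size $\tfrac12\binom{2k-1}{k-1}$ whose degrees on $[2k-1]$ are all equal; then set $\ff=\{\{2k\}\cup Q:Q\in\mathcal Q\}\cup\bigl(\binom{[2k-1]}{k}\setminus\{[2k-1]\setminus Q:Q\in\mathcal Q\}\bigr)$. This $\ff$ is automatically intersecting (any two $k$-subsets of $[2k-1]$ meet, and the only $k$-set disjoint from $\{2k\}\cup Q$ is $[2k-1]\setminus Q$, which has been removed), is a transversal of the complementary pairs, and a short degree computation as in \eqref{eq39} shows it is regular precisely when $\mathcal Q$ is. The family $\mathcal Q$ is produced by the $(g,h)$-replacement argument of Lemma~\ref{lem666}: start with any family of the right size and repeatedly swap an element of high degree for one of low degree inside a member set until all degrees differ by at most one; since the average degree $\tfrac12\binom{2k-2}{k-2}$ is an integer when $k$ is not a power of $2$, ``differ by at most one'' then forces all degrees to be equal. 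Because $\mathcal Q$ carries no intersecting constraint, this balancing is elementary --- which is the simplification your sketched approaches lack.
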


Brace and Daykin also noted that it is easy to see that if $k$ is a power of $2$
that then a regular family of size $\binom{2k-1}{k}$ would have a non-integer degree,
that is there the bound of $\binom{2k-1}{k}$ is not tight. We complete their result
by showing the following.

\begin{theorem}\label{thm:neq2k_k_power_of_2}
  If $k\ge 4$ is a power of $2$, then the largest $k$-uniform regular intersecting
  family on $[2k]$ has size $\binom{2k-1}{k}-3$.
\end{theorem}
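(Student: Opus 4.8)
We need the exact extremal number for $k$-uniform regular intersecting families on $[2k]$ when $k \ge 4$ is a power of $2$, namely $\binom{2k-1}{k}-3$. Since $n=2k$, every $k$-set $F$ and its complement $[2k]\setminus F$ cannot both lie in an intersecting family; an intersecting family is exactly a system of representatives, one from each complementary pair, and its maximum possible size is $\binom{2k-1}{k}=\frac12\binom{2k}{k}$. Write $N:=\binom{2k-1}{k}$. A maximum intersecting family picks exactly one set from each of the $N$ complementary pairs $\{F,\bar F\}$. The degree condition is what forces us below $N$.

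\textbf{Step 1: the parity obstruction and why $3$ is forced.} First I would record the divisibility computation behind Brace--Daykin's remark. If $\ff$ is regular of size $m$, then the common degree is $\delta = mk/n = m/2$, so $m$ must be even; more importantly one counts more carefully. For a maximum family (one set per complementary pair), consider the contribution of element $1$: among the $\binom{2k-1}{k-1}$ complementary pairs $\{F,\bar F\}$ with $1\in F$ (hence $1\notin \bar F$), exactly one of $F,\bar F$ is chosen, contributing $1$ to $\delta(1)$ iff we chose the one containing $1$. Since $\binom{2k-1}{k-1}=\frac12\binom{2k}{k}=\binom{2k-1}{k}=N$, regularity of a \emph{maximum} family would require $\delta = N/2\cdot$ something — I would carry out the exact arithmetic to show that when $k$ is a power of $2$, $v$-adic valuations (Kummer's theorem on $\binom{2k}{k}$) make $\delta = k\binom{2k-1}{k}/(2k) = \frac12\binom{2k-1}{k}$ a non-integer, so $m=N$ and $m=N-1,\ N-2$ are all impossible, and one must show $N-3$ is the first feasible value on parity/divisibility grounds. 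The key point: removing $t$ sets from a maximum family (without adding any from the complementary side) and then we are no longer maximum; instead one should think of $\ff$ as choosing, from $N-t$ of the pairs, one set, and from $t$ pairs, \emph{neither} set. I would set up the degree equation $\delta(x) = $ (number of chosen sets through $x$) and show it is forced that the number of ``empty'' pairs is $\equiv N \pmod 3$ or similar — the precise modulus comes from the $2$-adic valuation of $\binom{2k-1}{k}$ when $k=2^a$.

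\textbf{Step 2: the construction achieving $N-3$.} This is where Construction~\eqref{eq11} and the operation $\ff^l$ of the previous subsection should come in, combined with Theorem~\ref{thm25}. The idea: take $k = k' \cdot 2$ with... no — rather, I expect the construction to start from a regular intersecting family on a smaller ground set that is \emph{not} of power-of-two uniformity (so Theorem~\ref{thm25} gives a maximum one), lift it via $\ff \mapsto \ff^l$ or $\ff+\G$ to uniformity $k$ on $[2k]$, and then patch in a small fixed configuration of $3$ missing sets so that the degrees stay constant. Concretely, one writes $[2k] = X \sqcup Z$ with $|X|=2k'$, $|Z|=2(k-k')$, takes $\ff_0$ a maximum regular intersecting family on $X$ (available since $k'$ can be chosen not a power of $2$, e.g.\ $k'=3$ when $k\ge 4$ is a power of $2$ forces... actually $k\ge 4$, take $k'=3$, $k-k' = k-3$), forms $\ff_0 + \binom{Z}{k-k'}$ which is regular intersecting on $[2k]$ with ratio $1/2$, and then carefully modifies a bounded number of sets to hit exactly $N-3$ while preserving regularity. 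I would verify the size by the product formula $|\ff_0+\G| = |\ff_0|\cdot|\G|$ and track how far below $N$ this lands, then adjust with an explicit swap gadget on a constant-size sub-configuration; the count of ``$3$'' should emerge from matching the $2$-adic discrepancy identified in Step~1.

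\textbf{The main obstacle.} The hard part is Step~2 made precise: engineering an \emph{explicit} maximum regular intersecting family of size exactly $N-3$, since the general constructions of Section~\ref{sec4} multiply sizes and naturally produce families much smaller than $N$ unless the building blocks are themselves near-maximum, and making a local modification that changes the size by exactly $3$ while keeping \emph{every} one of the $2k$ degrees equal is delicate. I would expect to need a bespoke small gadget — perhaps based on the Fano-type or a $7$-point structure, or on a careful choice of which $3$ complementary pairs to leave empty so that each of the $2k$ elements loses the same net degree — together with Theorem~\ref{thm25} applied to a carefully chosen odd or non-power-of-two $k'$. The upper bound direction (Step~1) I expect to be routine number theory (Kummer/Legendre on binomial valuations) once set up correctly, with the only subtlety being ruling out $N-1$ and $N-2$ as well as $N$, which again reduces to a valuation computation for $k=2^a$.
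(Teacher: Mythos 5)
Your proposal is a plan with genuine gaps in both directions, and neither gap is closed by the route you sketch. For the upper bound: the divisibility observation only rules out the odd values. Since $\delta=|\ff|k/n=|\ff|/2$ must be an integer, $|\ff|$ is even, while $N:=\binom{2k-1}{k}$ is odd for $k=2^t$; this kills $|\ff|=N$ and $|\ff|=N-2$ but says nothing about $N-1$, which is even and hence passes every global parity/valuation test. Your suggestion that the number of empty complementary pairs is forced into some residue class ``$\equiv N\pmod 3$ or similar'' coming from $2$-adic valuations is not an argument and, as far as I can see, no such purely arithmetic obstruction exists. The paper excludes $N-1$ structurally: it sets $\ff(2k):=\{F\in\ff:2k\in F\}$ with degrees $d(i)$, $i\in[2k-1]$. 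If all $d(i)$ coincide, then $\tfrac12|\ff|=d(1)\tfrac{2k-1}{k-1}$ forces $2(2k-1)\mid|\ff|$ and hence $|\ff|\le N-(2k-1)$. Otherwise it forms $\bar\ff(2k):=\binom{[2k-1]}{k}\setminus\{[2k]\setminus F:F\in\ff(2k)\}$, notes $\ff\setminus\ff(2k)\subset\bar\ff(2k)$ and that $\ff':=\ff(2k)\cup\bar\ff(2k)$ has size exactly $N$ with the degree of $i$ equal to $\binom{2k-2}{k-1}-|\ff(2k)|+2d(i)$; two of these degrees differ by at least $2$, so at least two sets must be removed from $\ff'$ to reach a regular family, and since the deficit $N-|\ff|$ is odd it is at least $3$. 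You have none of this machinery.

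For the lower bound you correctly diagnose the obstacle but do not overcome it: the constructions $\ff^l$, $\ff+\G$, $\ff\times\ff_2$ from Section~\ref{sec4} produce families whose size is a constant factor strictly below $\tfrac12\binom{2k}{k}$ (e.g.\ starting from the Fano plane your $\ff_0+\binom{Z}{k-3}$ has size $10\binom{2k-6}{k-3}\approx 0.31\binom{2k-1}{k}$), and no bounded ``gadget'' can add the missing positive fraction of $\binom{2k-1}{k}$ sets. The paper's construction is not a product construction at all: it builds (Lemma~\ref{lem666}) an explicit family $\mathcal{Q}\subset\binom{[2k-1]}{k-1}$ of size $\tfrac12\bigl(\binom{2k-1}{k-1}-3\bigr)$, with degrees equal on $[3k/2+1,2k-1]$ and one higher on $[3k/2]$, avoiding three designated sets $A_1,A_2,A_3$; this is done by a greedy degree-equalization (``replacement'') procedure. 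Then $\ff(2k):=\{\{2k\}\cup Q:Q\in\mathcal{Q}\}$ together with $\bar\ff(2k)$ has size $N$, and deleting the three sets $[2k-1]\setminus A_i$ — which cover each of the first $3k/2$ elements exactly twice and the rest not at all — yields a regular intersecting family of size $N-3$. Both halves of the paper's proof hinge on the complementation map $F\mapsto[2k]\setminus F$ restricted to sets containing the element $2k$, an idea absent from your proposal.
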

Note that there is no regular intersecting family for $k=2$, $n=4$, and substituting $k=2$ in the bound above gives $0$. We also remark that from the proof we give below it is fairly easy to reconstruct the proof of Theorem~\ref{thm25}.
\begin{proof} First we show the upper bound. Let $\ff$ be the largest regular intersecting family for $n=2k$, $k=2^t$. Consider the family $\ff(2k):=\{F\in\ff:2k\in F\}$. Simple counting shows that $|\ff(2k)| = \frac{1}{2} |\ff|$. Assume that the degree of an element $i$ in $\ff(2k)$ is $d(i)$. If all $d(i)$, $i=1,\ldots, 2k-1$, are the same, then $\frac 12|\ff|=|\ff(2k)| = d(1)\frac{2k-1}{k-1}$, and thus $2(2k-1)$ must divide $|\ff|$ and $|\ff|\le {2k-1\choose k}-(2k-1)$. (Here we use an easy-to-check fact that $(2k-1)$ divides ${2k-1\choose k}$, but $2$ does not.)

Otherwise, consider the family \begin{equation}\label{eq38}\bar\ff(2k):={[2k-1]\choose k}\setminus \big\{[2k] \setminus F:F\in \ff(2k)\big\}.\end{equation}
Clearly, $\ff\setminus \ff(2k)\subset \bar\ff(2k)$. Moreover, the degree of the element $i\in[2k-1]$ in $\bar \ff(2k)$ is $\bar d(i) = {2k-2\choose k-1}-|\ff(2k)|+d(i)$. Therefore, the degree of $i\in[2k-1]$ in $\ff':=\ff(2k)\cup \bar\ff(2k)$ is \begin{equation}\label{eq39}{2k-2\choose k-1}-|\ff(2k)|+2d(i).\end{equation}

We have $|\ff| = {2k-1\choose k}-(2l+1)$ for some $l\ge 0$. Thus  $\ff\setminus \ff(2k)=\bar\ff(2k)\setminus \mathcal G$, where $|\mathcal G|=2l+1$.  Since not all $d(i)$, $i=1,\ldots, 2k-1$, are equal to each other, by the displayed formula above two degrees in $\ff'$ differ by at least $2$. Thus, one has to delete at least two sets from $\ff'$ to get a regular family, which implies that $l\ge 1$ and proves $|\ff|\le {2k-1\choose k}-3$.
This shows the upper bound in Theorem \ref{thm:neq2k_k_power_of_2}.\\


Now let us show the lower bound. The following lemma is the key step. (We postpone the proof of the lemma until the end of the proof of the theorem.)

\begin{lemma}\label{lem666} There exists a family $\mathcal Q\subset{[2k-1]\choose k-1}$ of size $\frac 12({2k-1\choose k-1}-3)$ with the degrees $d'(i)$ of $i\in [2k-1]$ satisfying $d'(1)-1 = d'(2)-1 = \ldots = d'(3k/2)-1 = d'(3k/2+1) = \ldots =  d'(2k-1)$ and such that the sets $A_1:=[k+1,2k-1]$, $A_2:=[1,k/2]\cup [3k/2+1,2k-1]$, and $A_3:=[k/2+1,k]\cup [3k/2+1,2k-1]$ are not in $\mathcal Q$.
\end{lemma}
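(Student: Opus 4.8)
The plan is to construct $\mathcal Q$ explicitly as a union of symmetric building blocks minus a small correction, exploiting the high degree of symmetry in the statement. Observe that the desired degree sequence is almost constant: $d'(i)$ takes one value $c$ on the "light" block $L:=[3k/2+1,2k-1]$ of size $k/2-1$ and value $c+1$ on the "heavy" block $H:=[1,3k/2]$ of size $3k/2$. First I would split $[2k-1]=H\cup L$ and think of a candidate family as being built from the symmetric family $\binom{[2k-1]}{k-1}$ (or a suitable "half" of it, since $|\mathcal Q|$ is roughly half of $\binom{2k-1}{k-1}$): the natural first move is to take a family that is invariant under $\mathrm{Sym}(H)\times\mathrm{Sym}(L)$, so that its degree function is automatically constant on $H$ and constant on $L$, and then tune the single integer parameter (how much "mass" sits inside $H$ versus straddling into $L$) so that the two degrees differ by exactly $1$. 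Concretely, for a family whose members all meet $L$ in a prescribed number $t$ of elements, double counting gives the $H$-degree and $L$-degree in closed form, and a convexity/averaging argument over $t$ shows some mixture realizes the required difference of $1$; one then picks a symmetric subfamily of $\binom{[2k-1]}{k-1}$ of exactly the target size $\tfrac12\big(\binom{2k-1}{k-1}-3\big)$ realizing it. The parity facts used in the upper-bound part of the theorem (namely $(2k-1)\mid\binom{2k-1}{k-1}$ but $2\nmid\binom{2k-1}{k-1}$ for $k=2^t$) guarantee that $\binom{2k-1}{k-1}-3$ is even, so the target size is an integer, and they also explain why $3$ (rather than $1$) sets must be removed.

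Next I would handle the three forbidden sets $A_1,A_2,A_3$. Note $A_1=L\cup[k+1,2k-1]\cap H$... more precisely $A_1=[k+1,2k-1]$ has size $k-1$ and consists of $[k+1,3k/2]\subset H$ together with all of $L$; while $A_2=[1,k/2]\cup L$ and $A_3=[k/2+1,k]\cup L$ each also contain all of $L$ and a $k/2$-subset of $H$. So all three forbidden sets contain $L$ entirely and differ only inside $H$. The idea is to build $\mathcal Q$ so that it avoids these three; since a fully $\mathrm{Sym}(H)\times\mathrm{Sym}(L)$-invariant family containing one of them would contain all $\binom{3k/2}{k-1}$ many $(k-1)$-sets of the form $(\text{$(k-1)$-subset of }H)$ — wait, $A_1$ has $k/2-1$ elements in $L$... — so instead I would only impose invariance under a subgroup, e.g. $\mathrm{Sym}(L)$ together with the subgroup of $\mathrm{Sym}(H)$ fixing the partition $H=[1,k/2]\cup[k/2+1,k]\cup[k+1,3k/2]$ into three blocks of size $k/2$. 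Under this smaller group the three sets $A_1,A_2,A_3$ lie in a common orbit (each is "all of $L$, plus all of one $H$-block, plus a full second... " — one checks $A_1$ contains $L$ and the block $[k+1,3k/2]$; $A_2$ contains $L$ and $[1,k/2]$; $A_3$ contains $L$ and $[k/2+1,k]$), so removing the whole orbit of size $3$ removes exactly these three sets and preserves the block-constant degree sequence; that orbit is precisely the "$-3$" in the size count.

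The main obstacle I expect is the bookkeeping that simultaneously pins down (i) the exact size $\tfrac12(\binom{2k-1}{k-1}-3)$, (ii) the exact degree gap of $1$ between $H$ and $L$, and (iii) the non-membership of $A_1,A_2,A_3$, since these three constraints interact: removing the $3$-orbit of the $A_i$'s perturbs the degrees on $H$ and on $L$ by different amounts, so the "main" symmetric family must be chosen with a compensating asymmetry built in. The cleanest route is probably to first construct a symmetric family $\mathcal Q_0\subset\binom{[2k-1]}{k-1}$, invariant under the block-preserving group, with size $\tfrac12(\binom{2k-1}{k-1}-3)+3$ and containing all three $A_i$, whose degrees on $H$ and $L$ differ by exactly $1$ after one removes the $3$-orbit $\{A_1,A_2,A_3\}$; then set $\mathcal Q:=\mathcal Q_0\setminus\{A_1,A_2,A_3\}$. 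Verifying that such a $\mathcal Q_0$ exists reduces, via standard double counting of incidences between $[2k-1]$ and the $\mathrm{Sym}(L)\times(\text{block group})$-orbits on $\binom{[2k-1]}{k-1}$, to solving a small integer linear system in the orbit multiplicities; the existence of a valid integer solution is where the hypothesis $k=2^t\ge 4$ (through the divisibility of $\binom{2k-1}{k-1}$) does the real work, and checking that solution is the only genuinely computational part of the argument. Finally, one records $|\mathcal Q|=\tfrac12(\binom{2k-1}{k-1}-3)$, the degree identities $d'(1)-1=\dots=d'(3k/2)-1=d'(3k/2+1)=\dots=d'(2k-1)$, and $A_1,A_2,A_3\notin\mathcal Q$, completing the proof of the lemma.
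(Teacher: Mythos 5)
Your overall architecture (build a highly symmetric family $\mathcal Q_0$ of size $\tfrac12\bigl(\binom{2k-1}{k-1}+3\bigr)$ containing $A_1,A_2,A_3$, then delete that $3$-element orbit) is a genuinely different route from the paper, which instead starts from all $(k-1)$-sets through the element $2k-1$ minus $\{A_1,A_2,A_3\}$, pads with an almost-regular family on $[2k-2]$, and equalizes degrees by local $(g,h)$-replacements; the non-membership of the $A_i$ is preserved there because the final swaps only ever remove sets containing $2k-1$ and all three $A_i$ contain $2k-1$. Your identification of the $A_i$ as the orbit $\{B_j\cup L: j=1,2,3\}$ of ``full block plus all of $L$'' is correct, and your degree bookkeeping (each $h\in H$ lies in exactly one $A_i$, each $\ell\in L$ in all three, so deleting the orbit turns $d_L=d_H+1$ into the required gap of $1$ in the other direction) is also right.

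However, the step you defer as ``solving a small integer linear system in the orbit multiplicities'' is the entire difficulty, and in its cleanest form it provably fails. Take $k=4$, the smallest case of the lemma: $[7]=B_1\cup B_2\cup B_3\cup L$ with $|B_j|=2$, $|L|=1$, and the orbits of $\binom{[7]}{3}$ under the full block-permuting group $(\mathrm{Sym}(2)\wr\mathrm{Sym}(3))\times\mathrm{Sym}(L)$ have sizes $3$ (the $A_i$), $12$, $12$, and $8$. You need $|\mathcal Q_0|=\tfrac12(35+3)=19$, i.e.\ $16$ more from $\{12,12,8\}$, and no subset sums to $16$. So there is no union of full orbits of the right size, and you are forced either to use partial orbits or to drop the block-permutation symmetry — but then the degree function is no longer automatically constant on $H$, and you are back to a nontrivial balancing problem (now with the extra constraint of equalizing three block degrees) for which you give no argument and no reason to expect integer solvability for general $k=2^t$. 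Until that existence statement is actually proved — by exhibiting the orbit multiplicities, or by some replacement/exchange argument of the kind the paper uses — the proposal does not establish the lemma.
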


Double counting the sum of $d'(i)$ in such $\mathcal Q$, we get that the degree of, say, $2k-1$ equals \begin{equation}\label{eq40}\frac{1}{2k-1}\Big(\frac {k-1}2\Big[{2k-1\choose k-1}-3\Big]-\frac {3k}2\Big)=\frac{k-1}{2(2k-1)}{2k-1\choose k-1}-\frac 32 = \frac 12{2k-2\choose k-2}-\frac 32,\end{equation}
which, in particular, is an integer number (recall that ${2k-1\choose k-1}$ is odd, and so is ${2k-2\choose k-2} = \frac {k-1}{2k-1}{2k-1\choose k-1}$). This shows that there are no divisibility obstructions to the existence of $\mathcal Q$.

 We put $\ff(2k):=\{\{2k\}\cup Q: Q\in \mathcal Q\}$. Next, we consider the family $\bar\ff(2k)$ defined as in \eqref{eq38}. The family $\ff':=\ff(2k)\cup\bar\ff(2k)$ has size ${2k-1\choose k}$ and the degrees of the first $3k/2$ elements are bigger by two than the degrees of the last $k/2$ elements as we will see in the following. The degree of $2k$ in $\ff'$ is $|\ff(2k)| = \frac 12({2k-1\choose k}-3)$, which is the same as the degree of, say, $2k-1$, equal by \eqref{eq39} and \eqref{eq40} to
$${2k-2\choose k-1}-\frac 12\left({2k-1\choose k-1}-3\right)+{2k-2\choose k-2}-3 = \frac 12\left({2k-1\choose k-1}-3\right).$$
Moreover, $\ff'$ contains $[2k-1]\setminus A_i$ for $i=1,2,3$. The family $\{[2k-1]\setminus A_i:i = 1, 2, 3\}$ has degree $2$ on the first $3k/2$ elements and $0$ on the remaining ones. Thus, deleting these sets from $\ff'$, we get a regular family $\ff$ of size ${2k-1\choose k}-3$.

It remains to show that $\ff'$ is intersecting. Clearly, $\ff(2k)$ is intersecting as all
elements contain $2k$ and $\bar\ff(2k)$ is intersecting as all its elements are in $\binom{[2k-1]}{k}$.
By the definition of $\bar\ff(2k)$ in \eqref{eq38}, if $F \in \ff(2k)$, then $[2k] \setminus F \notin \bar\ff(2k)$.
Hence, each element of $\ff(2k)$ intersects all elements of $\bar\ff(2k)$.
\end{proof}

\begin{proof}[Proof of Lemma~\ref{lem666}] Due to \eqref{eq40}, we only need to show that there exists $\mathcal Q$ of desired size and without $A_i$, in which any two degrees differ by at most $1$.

We start with a construction of some auxiliary family $\mathcal Q'$ which satisfies $|\mathcal Q'| = |\mathcal Q|$. The first part of $\mathcal Q'$ is the family $\mathcal Q(2k-1)$ of all sets containing the element $2k-1$ except for the sets $A_i$. This accounts for ${2k-2\choose k-2}-3$ sets, and we have to choose $\frac 12({2k-1\choose k-1}-3)-{2k-2\choose k-2}+3 = \frac 1{2k-2}{2k-2\choose k-2}+\frac 32$ sets more. The remaining sets we choose out of ${[2k-2]\choose k-1}$ in such a way that the degrees of
the elements in $[2k-2]$ differ by at most one and are non-decreasing as $i\in[2k-2]$ increases. This could be done as follows (the same idea was used in the proof of Theorem 1 in \cite{Brace1972}).

We start with any family $\mathcal Q''$ in ${[2k-2]\choose k-1}$ of needed size. Then, if there are $g,h\in [2k-1]$, such that the degree of $g$ is bigger than the degree of $h$ by at least $2$, then we take a set $A$, such that $g\in A$, $h\notin A$, and $A':=\{h\}\cup A\setminus \{g\}$ is not in the family, and replace $A$ with $A'$. We call such an operation a {\it $(g,h)$-replacement}. Repeating this procedure will eventually lead to a family $\mathcal Q(\overline{2k-1})$, in which any two degrees differ by at most $1$.
We may also w.l.o.g. assume that elements are ordered from the ones with the smaller degree to the ones with the larger degree.

The resulting family is $\mathcal Q':=\mathcal Q(\overline{2k-1})\cup \mathcal Q(2k-1)$. Due to the fact that $A_i\notin \mathcal Q(2k-1)$, the degrees of the first $3k/2$ elements are bigger by 2 than the degrees of $3k/2+1,\ldots, 2k-2$. However, the degree of any $i\in [3k/2]$ in $\mathcal Q'$ is at most
$${2k-3\choose k-3}-1+\Big\lceil\frac 12\big(\frac 1{2k-2}{2k-2\choose k-2}+\frac 32\big)\Big\rceil\le\frac{k-\frac32}{2k-2}{2k-2\choose k-2}+\frac 12.$$
where the first term in the left hand side is the degree of $1$ in $\mathcal Q(2k-1)$ and the second term is the upper integer part of the average degree in $\mathcal Q(\overline{2k-1})$. The right hand side is at most the right hand side of \eqref{eq40} plus $1$ (which should be the degree of $i\in[3k/2]$ in $\mathcal Q$), since the difference is $\frac 1{4k-4}{2k-2\choose k-2}-\frac 54\ge 0$ for any $k\ge 4$. Similarly, the degree of each element in $[3k/2+1,2k-2]$ in $\mathcal Q'$ is at most what it should be in $\mathcal Q$.

This implies that we can obtain the family $\mathcal Q$ with degrees that differ by at most $1$ doing $(2k-1,h)$-replacements (and thus transferring the excess of the degree of $2k-1$ to other elements). In particular, since $2k-1\in A_i$, it implies that such $\mathcal Q$ will not contain $A_i$, $i=1,2,3$. The lemma is proven.
\end{proof}

\subsection{The cases $n=2k+1$ and $k=4$}

Consider the case $n = 2k+1$. The bound in Theorem \ref{thm:bnd_n_small} in this case is $\frac{k-2}{2k-1} \binom{2k+1}{k}$.
For $k=3$ this is $7$ and Theorem \ref{thm:char_n_max} states that this is only obtained
if $\scrF$ is the Fano plane. It is easy to see that the bound is always an integer
and that the degree in case of equality is the integer $\frac{k-2}{2k-1} \cdot \frac{k}{2k+1} \binom{2k+1}{k} = \frac{k-2}{2k-1} \binom{2k}{k-1}$.
We found examples that reach this bound for $k \in \{3, 4\}$. For $k=3$ there is one example, the Fano plane.
By computer, we classified all examples for $k=4$ where there are exactly two regular examples of size $36$ up to isomorphism.
The following table summarizes our knowledge for small $k$ and $n = 2k+1$.

\begin{center}
\begin{tabular}{lrrrrr}
 $k$ & 3 & 4 & 5 & 6 \\ \midrule
 Theorem \ref{thm:bnd_n_small} & $7$ & $36$ & $154$ & $624$  \\ \midrule
 Largest & $7$ & $36$ & $\geq 110$ & $\geq 442$ \\ \midrule
 $\delta$ & $3$ & $16$ & $\geq 50$ & $\geq 204$ \\
\end{tabular}
\end{center}

As we know the complete situation for $k=3$, here is a table of computer results
for $k=4$. The general bound refers to either Theorem \ref{thm:bnd_n_small}
(including improvements due to the fact that the size of the regular intersecting family
and its degree are integers), or to Theorem \ref{thm:char_n_max}.

\begin{center}
\begin{tabular}{lrrrrrr}
 $n$ & $8$ & $9$ & $10$ & $11$ & $12$ & $13$ \\ \midrule
 General Bound & $34$ & $36$ & $35$ & $33$ & $33$ & $13$ \\ \midrule
 Largest & $32$ & $36$ & $20$ & $11$ & $\geq 12$ & $13$ \\ \midrule
 $\delta$ & $16$ & $16$ & $8$ & $4$ & $\geq 4$ & $4$ \\
\end{tabular}
\end{center}
%

\section{Conclusion}\label{sec5}

All known finite projective planes have a prime power as order.
It is a famous and long-standing open problem to decide whether there exist projective planes
which do not have prime power order. In light of this it is clear that determining
the existence of regular $k$-uniform intersecting families is hard for $n=k^2-k+1$.

There are several other problems for which it seems to be more feasible to
obtain new results. We list some of them below.

The Bruck-Ryser-Chowla theorem \cite{Bruck1949} implies that if there is a projective
plane of order $k-1$ and $k-1$ is congruent $1$ or $2$ modulo $4$, then $k-1$
is the sum of two squares. This implies that Theorem \ref{thm:bnd_on_n} is
not tight for the orders $6, 14, 21, 22, \ldots$. The case of order $10$ was ruled
out separately by computer \cite{Lam1989}.

\begin{question}
  For those $k$ for which the non-existence of a projective plane of order $k-1$
  is known, what is the largest $n$ for which a regular $k$-uniform intersecting
  family exists?
\end{question}

\begin{question}
  Are there examples for which the bound in Theorem \ref{thm:bnd_n_small}
  with $n \geq 2k+1$ and $s=1$ is tight except for $(n,k) = (7,3)$ and $(n,k) = (9,4)$?\footnote{In an earlier version of this document we mistakenly did not specify $s=1$. For general $s$ Adam S. Wagner noticed that there exists a $3$-regular intersecting family which reaches the bound in Theorem \ref{thm:bnd_n_small}, see \cite{Wagner2019}.}
\end{question}

Theorem \ref{thmdf} is based on the analysis of Boolean functions in the Hamming
graph, that means an investigation on how $0$-$1$-vectors can lie in certain
eigenspaces of the adjacency matrix of the Hamming graph. It would be very interesting
to do this investigation directly in the Johnson scheme with more direct algebraic
techniques to obtain a non-asymptotic version of Theorem~\ref{thmmain1}.

\section*{Acknowledgements}
The research of the first author is supported by ERC advanced grant 320924
and he is supported by a postdoctoral fellowship of the Research Foundation - Flanders (FWO).
The research of the second author is supported the grant RNF 16-11-10014.


\end{document}